\newcommand{\RR}{\ensuremath{\mathbf{R}}}
\newcommand{\PP}{\ensuremath{\mathbf{P}}}
\newcommand{\pt}{\ensuremath{\mathrm{pt}}}
\newcommand{\w}{\ensuremath{\mathrm{w}}}
\newcommand{\krv}{\ensuremath{\kappa}}
\newcommand{\into}{\ensuremath{\hookrightarrow}}
\newcommand{\II}{\ensuremath{\mathrm{II}}}
\newcommand{\R}{\ensuremath{R}}
\newcommand{\Rep}{\ensuremath{R^\epsilon}}
\newcommand{\gep}[1]{\ensuremath{g_\epsilon\!\left( #1 \right)}}
\newcommand{\HH}{\ensuremath{\mathcal{H}}}
\newcommand{\VV}{\ensuremath{\mathcal{V}}}
\theoremstyle{plain}
\newtheorem*{theorem*}{Theorem}
\newtheorem{theorem}{Theorem}
\newtheorem{proposition}[theorem]{Proposition}
\newtheorem*{proposition*}{Proposition}
\newtheorem{corollary}[theorem]{Corollary}
\newtheorem{lemma}[theorem]{Lemma}
\newtheorem*{lemma*}{Lemma}
\newtheorem*{corollary*}{Corollary}
\newtheorem*{question*}{Question}
\newtheorem*{idea*}{Idea}
\theoremstyle{definition}
\newtheorem{definition}[theorem]{Definition}
\newtheorem{example}[theorem]{Example}
\newtheorem*{definition*}{Definition}
\newtheorem*{example*}{Example}
\newtheorem*{remark*}{Remark}
\theoremstyle{plain}
\newcommand{\mrm}[1]{\mathrm{#1}}
\newcommand{\mc}[1]{\mathcal{#1}}
\newcommand{\dchi}{\ensuremath{\mrm{d}\chi}}
\newcommand{\dk}{\ensuremath{\mrm{d}\kappa}}
\title{A New Approach to Euler Calculus for Continuous Integrands}
\author{Carl McTague}
\email{carl.mctague@rochester.edu}
\urladdr{\href{http://www.mctague.org/carl}{www.mctague.org/carl}}
\address{Mathematics Department, University of Rochester, Rochester, NY 14627, USA}
\address{Mathematics Department, Johns Hopkins University, Baltimore, MD 21218, USA}
\address{Mathematics Department, University of Southampton, Southampton, SO17 1BJ, UK}
\begin{document}

\begin{abstract}
  Euler calculus is based on integrating simple functions with respect to the Euler characteristic. This paper makes the case for extending Euler calculus to continuous functions by integrating with respect to (Gaussian) curvature. This requires a metric but is nevertheless defined within any O-minimal theory. It satisfies a Fubini theorem and extends to a functor. Euler calculus is the ``adiabatic limit'' of this ``curvature calculus''. All this suggests new applications of differential geometry to data analysis.
\end{abstract}

\maketitle

\section{Overview}

The aim of Euler calculus is to perform integration using the Euler characteristic $\chi$ as measure. This is a reasonable idea since:
\begin{align*}
  \chi(X \cup Y) = \chi(X) + \chi(Y) - \chi(X \cap Y)
\end{align*}
(Provided one uses compactly supported cohomology, that is. For example, if $X\cup Y=\mrm{S}^1$ and $Y$ is a point then, since $\chi(Y)=1$ and $\chi(X\cup Y)=0$, then the formula can hold only if $\chi(X)=-1$.)

But this is only true for \emph{finite} unions so $\chi$ isn't a true measure. As a result the Euler integral of a \emph{simple function} (in the sense of measure theory) is easy to define:
    \begin{align*}
      \int \Big( \sum_\text{finite} \lambda_i \, 1_{V_i} \Big) \dchi
      = \sum_\text{finite} \lambda_i\,\chi(V_i)
    \end{align*}
but behaves poorly under limits:
\begin{align*}
  \lim s_n = \lim s'_n
  \quad\text{doesn't necessarily imply that}\quad
  \lim \int s_n \,\dchi = \lim \int s'_n \,\dchi
\end{align*}
even if the convergence is uniform.

Baryshnikov--Ghrist \cite{baryshnikov-ghrist-2010} studied this failure of convergence. They considered the Euler integrals of two sequences of simple functions converging (uniformly) to a given continuous function~$\alpha$:
\begin{align*}
  \int \alpha \lfloor \dchi \rfloor &= \lim_{n\to\infty} \int \tfrac1n \lfloor n\alpha \rfloor \,\dchi &
  \int \alpha \lceil \dchi \rceil &= \lim_{n\to\infty} \int \tfrac1n \lceil n\alpha \rceil \,\dchi
\end{align*}

These integrals differ in general.

\smallskip
\noindent
\begin{minipage}{\linewidth}
\begin{example}
  \mbox{}
  \begin{center}
    \includegraphics[width=130pt]{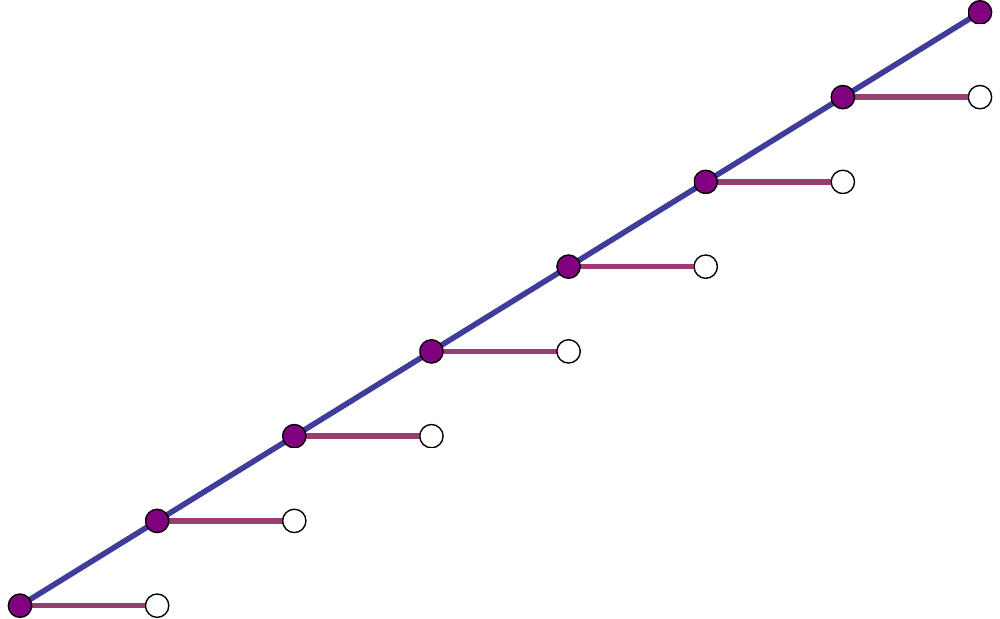}
    \hspace{20pt}
    \includegraphics[width=130pt]{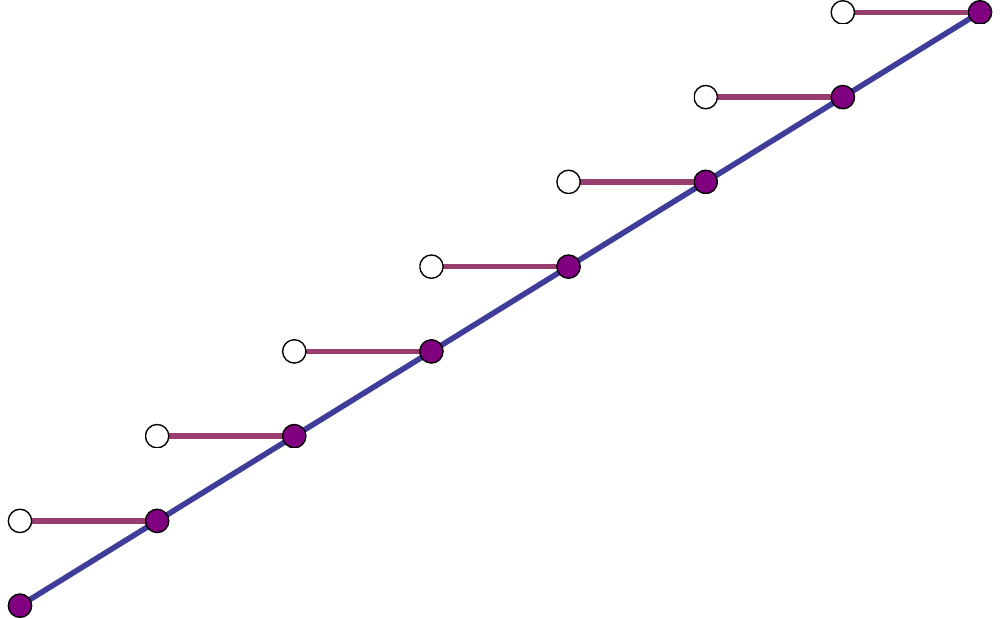} \\
    \nopagebreak[4]
    \hspace{15pt} $\int \mrm{id}_{[0,1]}\;\lfloor\dchi\rfloor=1$
    \hspace{60pt} $\int \mrm{id}_{[0,1]}\;\lceil\dchi\rceil=0$
  \end{center}
\end{example}
\end{minipage}
\smallskip

More generally:

\begin{lemma*}[{\cite[Lemma~1]{baryshnikov-ghrist-2010}}]
  If $\alpha:\Delta \to \RR$ is affine then:
  \begin{align*}
    \int_{\mathring{\Delta}} \alpha \lfloor \dchi \rfloor &= (-1)^{\dim(\Delta)} \inf \alpha &
    \int_{\mathring{\Delta}} \alpha \lceil \dchi \rceil &= (-1)^{\dim(\Delta)} \sup \alpha
  \end{align*}
  where $\mathring{\Delta}$ is the interior of $\Delta$.
\end{lemma*}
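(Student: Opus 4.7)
The plan is to compute the integer-valued Euler integral $\int_{\mathring\Delta}\lfloor n\alpha\rfloor\,\dchi$ exactly for each $n$, divide by $n$, and let $n\to\infty$. The ceiling identity will then follow from the floor one applied to $-\alpha$, using $\lceil t\rceil=-\lfloor -t\rfloor$ and $\inf(-\alpha)=-\sup\alpha$.

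Write $d=\dim\Delta$ and $U_t:=\{x\in\mathring\Delta:\alpha(x)\geq t\}$. Since $\{\lfloor n\alpha\rfloor\geq k\}=\{n\alpha\geq k\}=U_{k/n}$ for every integer $k$, and since $\alpha>\inf\alpha$ on $\mathring\Delta$ forces $\lfloor n\alpha\rfloor\geq a:=\lfloor n\inf\alpha\rfloor$ everywhere, one has the finite expansion $\lfloor n\alpha\rfloor=a+\sum_{k>a}1_{U_{k/n}}$. Additivity of the Euler integral on simple functions, combined with $\chi(\mathring\Delta)=(-1)^d$, then yields
\[
\int_{\mathring\Delta}\lfloor n\alpha\rfloor\,\dchi \;=\; (-1)^d a \;+\; \sum_{k>a}\chi(U_{k/n}).
\]

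The geometric heart of the argument is to show that $\chi(U_t)=0$ for every $t>\inf\alpha$. For such $t$, the complement $V_t:=\mathring\Delta\setminus U_t=\{\alpha<t\}\cap\mathring\Delta$ is the intersection of the two open convex sets $\mathring\Delta$ and $\{\alpha<t\}$; it is nonempty because $t>\inf\alpha$ means the closed face $\{\alpha=\inf\alpha\}$ of $\Delta$ lies strictly inside $\{\alpha<t\}$, and any neighborhood of this face in $\Delta$ meets $\mathring\Delta$. A nonempty open bounded convex subset of $\RR^d$ is homeomorphic to the open $d$-ball, so $\chi(V_t)=(-1)^d$; additivity across the partition $\mathring\Delta=U_t\sqcup V_t$ then forces $\chi(U_t)=0$.

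Every $k>a$ satisfies $k/n>\inf\alpha$, so each summand in the displayed sum vanishes and $\int_{\mathring\Delta}\lfloor n\alpha\rfloor\,\dchi=(-1)^d\lfloor n\inf\alpha\rfloor$. Dividing by $n$ and passing to the limit gives $(-1)^d\inf\alpha$, as required. The only genuine obstacle is the geometric step identifying $\chi(V_t)$; once one observes that $V_t$ is convex, open and bounded, the rest is bookkeeping, with minor care needed only at the borderline case $n\inf\alpha\in\ZZ$.
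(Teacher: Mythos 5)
Your argument is correct, and there is nothing in the paper to compare it against: the lemma is quoted from Baryshnikov--Ghrist without proof, and your layer-cake computation is essentially the standard argument for it. The two substantive points are exactly the ones you isolate: (i) the exact evaluation $\int_{\mathring{\Delta}}\lfloor n\alpha\rfloor\,\dchi=(-1)^{d}\lfloor n\inf\alpha\rfloor$ via the finite decomposition $\lfloor n\alpha\rfloor=a+\sum_{k>a}1_{U_{k/n}}$ (finite because $\alpha$ is bounded on the compact simplex), and (ii) $\chi(U_t)=0$ for $t>\inf\alpha$, which you obtain by observing that the complement $V_t$ is a nonempty bounded open convex subset of the $d$-dimensional affine hull, hence has compactly supported Euler characteristic $(-1)^d$, and invoking additivity of $\chi$ with compact supports on the definable partition $\mathring{\Delta}=U_t\sqcup V_t$ --- the correct variant of $\chi$ throughout, as the paper itself stresses, and the reason ``homeomorphic to an open $d$-ball'' (rather than a homotopy statement) is the right thing to say. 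Two cosmetic remarks: the clause ``$\alpha>\inf\alpha$ on $\mathring{\Delta}$'' is unnecessary and false for constant $\alpha$; the inequality $\alpha\ge\inf\alpha$ already gives $\lfloor n\alpha\rfloor\ge a$ everywhere, which is all you use. And the ``borderline case $n\inf\alpha\in\ZZ$'' needs no special care: $k>a$ still forces $k\ge a+1>n\inf\alpha$, so every term vanishes, and $\tfrac1n\lfloor n\inf\alpha\rfloor\to\inf\alpha$ regardless. The reduction of the ceiling identity to the floor identity for $-\alpha$ via $\lceil t\rceil=-\lfloor -t\rfloor$ and linearity of the Euler integral on simple functions is likewise fine.
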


Since $\inf$ and $\sup$ are not additive, neither of these integrals is.

\section*{A Na\"ive Starting Point}

For a fresh perspective, consider the problem in the simplicial context. So, for the time being:
\begin{enumerate}
\item A space is a simplicial complex $X$.
\item A \emph{simple function} on $X$ is an $\RR$-linear combination of its simplices.
\item A \emph{continuous function} on $X$ is a simplicial map $\alpha:X \to \RR$, i.e.\ a function defined by assigning a real number to each vertex and extending linearly to the interior of each simplex.
\end{enumerate}
We know how to integrate the former. Our goal is to integrate the latter.

In an intuitive sense easily made precise, there is a unique simple function which best approximates a continuous function $\alpha$, namely:
\begin{align*}
  \sum_{\Delta\in X} \alpha(\hat{\Delta}) \cdot 1_{\mathring{\Delta}}
\end{align*}
where the sum runs over each simplex $\Delta$ of $X$, and where $\mathring{\Delta}$ and $\hat{\Delta}$ denote its interior and barycenter. Since $\chi(\mathring{\Delta})=(-1)^{\dim(\Delta)}$, this simple function has Euler integral:
\begin{align*}
  \int \Big( \sum_{\Delta \in X} \alpha(\hat{\Delta}) \cdot 1_{\mathring{\Delta}} \Big) \dchi =
  \sum_{\Delta \in X} (-1)^{\dim(\Delta)} \alpha(\hat{\Delta})
\end{align*}

So it seems natural to wonder whether a good theory might result if one defined:

\theoremstyle{definition}
\newtheorem*{tent-def}{Tentative Definition}
\begin{tent-def}
  For $X$ and $\alpha : X \to \RR$ \emph{simplicial} let:
\begin{align*}
  \int_X \alpha \, \dchi = \sum_{\Delta \in X} (-1)^{\dim(\Delta)} \alpha(\hat{\Delta})
\end{align*}
where the sum runs over each simplex of $X$.
\end{tent-def}

At least this integral would be additive.

\smallskip
\noindent
\begin{minipage}{\linewidth}
\begin{example}
  Regarding $\mrm{id}_{[0,1]}$ as a simplicial map $\Delta\to\RR$, the tentative definition says:
  \begin{center}
      \includegraphics[width=130pt]{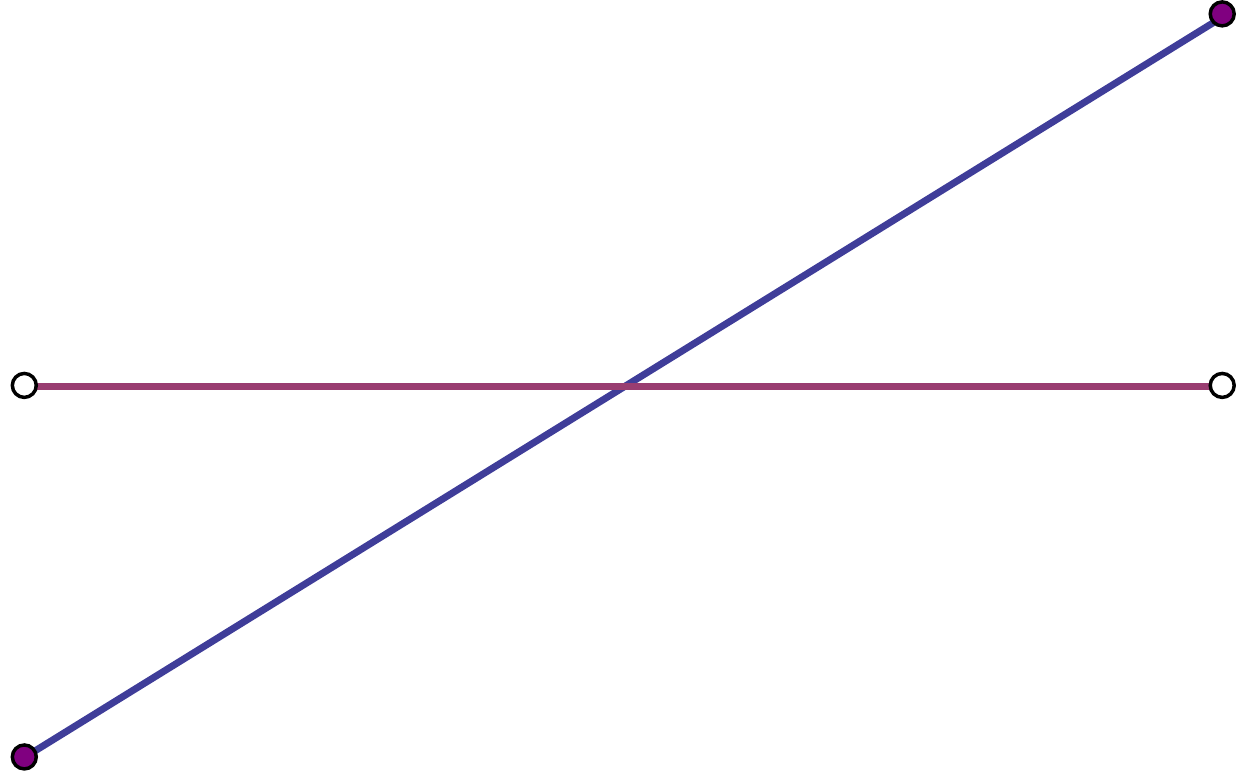} \\
      $\int \mrm{id}_{[0,1]}\,\dchi = \int \big( 0 \cdot 1_{\{0\}} + \tfrac12\cdot1_{(0,1)} + 1\cdot1_{\{1\}}\big)\dchi = 0-\tfrac12+1=\tfrac12$
  \end{center}
\end{example}
More generally, for any simplicial map $\alpha:\Delta\to\RR$, the tentative definition says that:
\begin{align*}
\int \alpha\,\dchi=\alpha(\hat{\Delta})
\end{align*}
\end{minipage}
\smallskip

One could try to extend the tentative definition to a continuous integrand on a topological space by considering a sequence of increasingly accurate simplicial approximations to it. To do so, one would want to know that the definition is stable under subdivision. However, this is \emph{not} true:

\smallskip
\noindent
\begin{minipage}{\linewidth}
\begin{example}
  \mbox{} \\
  \begin{center} 
  \includegraphics[height=95pt]{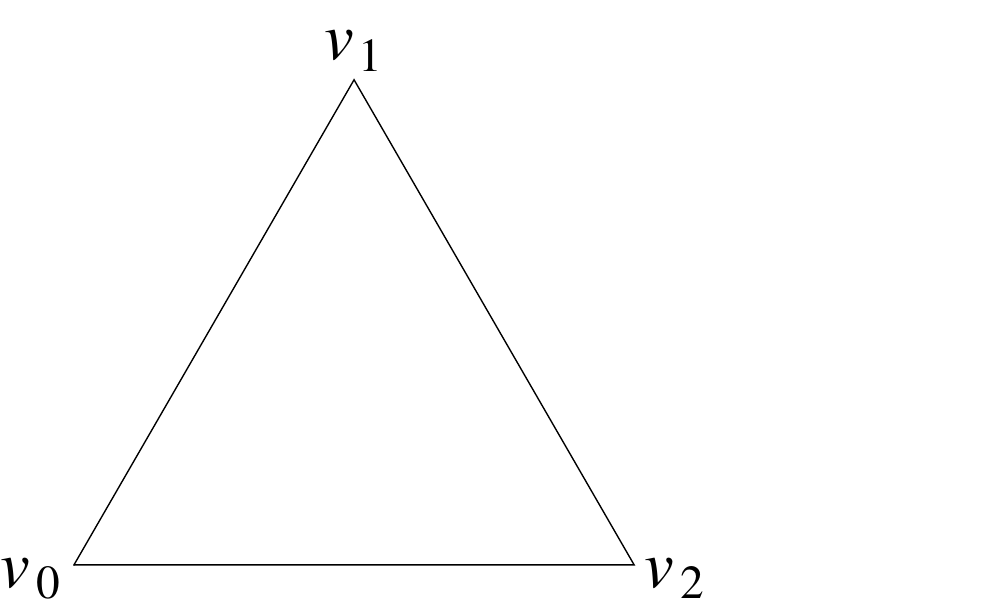}
  {\includegraphics[height=95pt]{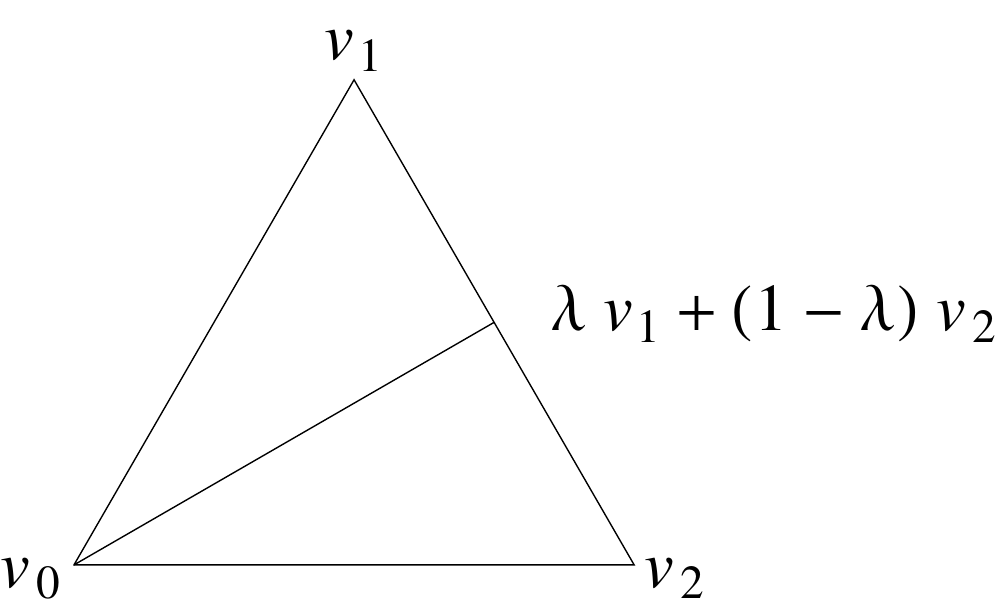}} \\
\hspace{0.0cm} $\int \alpha\,\dchi =\alpha(\hat{\Delta})=\tfrac13\sum\alpha(v_i)$
\hfill {$\int \alpha\,\dchi=\tfrac16\alpha(v_0) + \big(\tfrac13+\tfrac16\lambda\big)\;\alpha(v_1) + \big(\tfrac12-\tfrac16\lambda\big)\;\alpha(v_2)$}
\end{center}
\medskip
These integrals differ for any $0<\lambda<1$. But if one carries out a full barycentric subdivision then, after considerable calculation, one recovers the original integral:
\begin{center}
    \includegraphics[height=100pt]{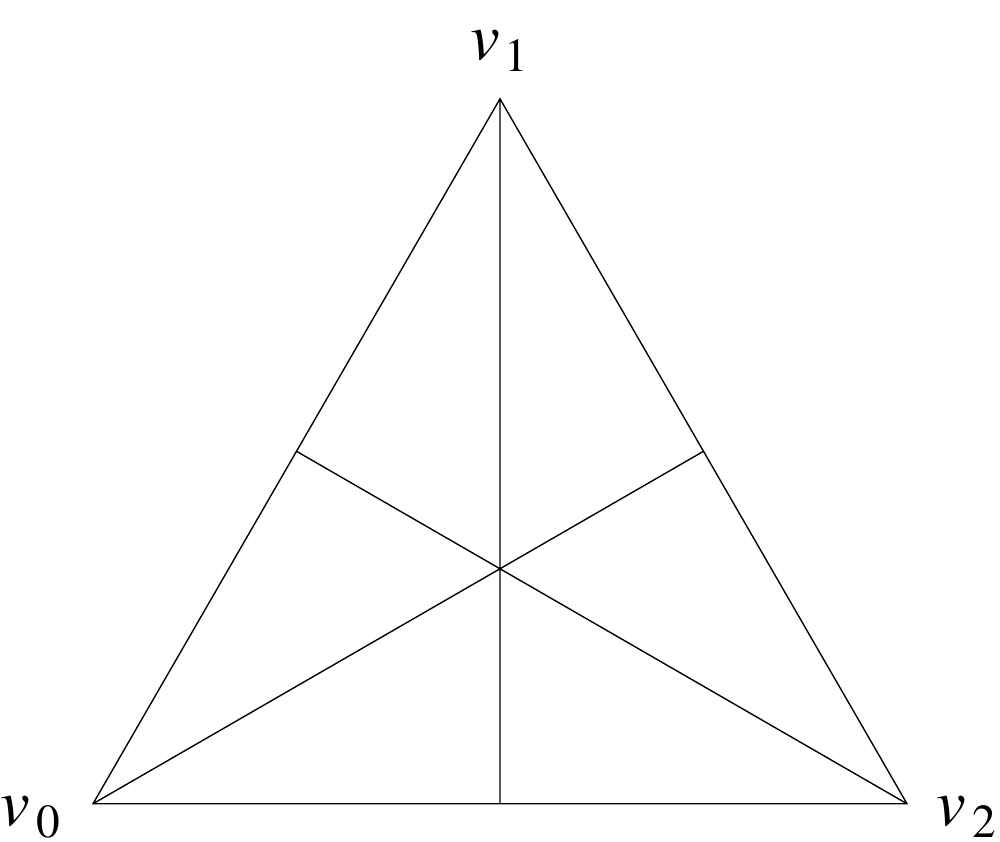} \\
    $\int_{\Delta^{(1)}} \alpha^{(1)}\,\dchi = \alpha(\hat{\Delta})=\int_\Delta \alpha\,\dchi$  
\end{center}
\end{example}
\end{minipage}
\smallskip

This is a general phenomenon:
\begin{theorem}
  \label{thm:crumple}
  For any $n\ge1$:
  \begin{align*}
    \int_X \alpha \; \dchi = \int_{X^{(n)}} \alpha^{(n)} \; \dchi
  \end{align*}
  where $\alpha^{(n)} : X^{(n)}\to\RR$ is the linear extension of $\alpha$ to the $n^\mathrm{th}$ barycentric subdivision $X^{(n)}$ of $X$.
\end{theorem}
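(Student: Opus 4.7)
My plan is to reduce to $n=1$ by iteration: since $X^{(n)}$ is the barycentric subdivision of $X^{(n-1)}$ and $\alpha^{(n)}$ is the linear extension of $\alpha^{(n-1)}$, the $n=1$ case applied repeatedly gives all $n\ge 1$. So the goal becomes $\int_X \alpha \, \dchi = \int_{X^{(1)}} \alpha^{(1)} \, \dchi$.

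The heart of the argument is the single-simplex case: when $X$ consists of a single closed $n$-simplex $\Delta$ together with all its faces, both integrals equal $\alpha(\hat{\Delta})$. The LHS was already identified as $\alpha(\hat{\Delta})$ in the example preceding the theorem. For the RHS I would use symmetry. The assignment $\alpha\mapsto\int_{\Delta^{(1)}}\alpha^{(1)}\,\dchi$ is a linear functional of the vertex values $(\alpha(v_0),\ldots,\alpha(v_n))$, and since $\Delta^{(1)}$ is built canonically from $\Delta$, this functional is invariant under the natural action of the symmetric group $S_{n+1}$ permuting the vertices of $\Delta$. Any such symmetric linear functional has the form $c\sum_i\alpha(v_i)$. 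To pin down $c$, take $\alpha\equiv 1$: then $\int_{\Delta^{(1)}}1\,\dchi=\chi(\Delta^{(1)})=\chi(\Delta)=1$, forcing $c=1/(n+1)$, so $\int_{\Delta^{(1)}}\alpha^{(1)}\,\dchi=\frac{1}{n+1}\sum_i\alpha(v_i)=\alpha(\hat{\Delta})$, as required.

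To pass to a general complex $X$, I would induct on the number of simplices, exploiting that both sides of the theorem satisfy inclusion-exclusion: $\int_{X_1\cup X_2}+\int_{X_1\cap X_2}=\int_{X_1}+\int_{X_2}$ is immediate from the definition on the LHS, and follows from the compatibilities $(X_1\cup X_2)^{(1)}=X_1^{(1)}\cup X_2^{(1)}$ and $(X_1\cap X_2)^{(1)}=X_1^{(1)}\cap X_2^{(1)}$ on the RHS. If $X$ is the closure of a single simplex, apply the lemma above; otherwise pick a maximal simplex $\Delta$ of $X$ and set $X_1$ to be $\Delta$ together with its faces and $X_2=X\setminus\{\Delta\}$, so that $X_1\cap X_2$ is the boundary subcomplex $\partial\Delta$. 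Each of $X_1$, $X_2$, and $X_1\cap X_2$ has strictly fewer simplices than $X$, so the inductive hypothesis together with additivity closes the argument. The main obstacle is thus the single-simplex case, which the paper itself flags as a ``considerable calculation'' for the $2$-simplex; the symmetry argument above sidesteps the combinatorics entirely and makes the proof uniform in dimension.
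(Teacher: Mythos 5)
Your proof is correct, but it reaches the crucial single-simplex identity by a genuinely different route than the paper. The paper's appendix proves the lemma $\int_{\Delta^{(1)}}\alpha^{(1)}\,\dchi=\alpha(\hat{\Delta})$ by explicit combinatorics: it indexes the simplices of $\Delta^{(1)}$ by chains of subsets of the vertex set, groups them by \emph{signature} $(s_0,\dots,s_i)$, counts each group via the orbit--stabilizer theorem, and exhibits the pairwise cancellation $B(s_0,\dots,s_i)+B(s_0,\dots,s_{i-1})=0$ through a telescoping sum, leaving only the barycenter term. Your symmetry argument replaces all of this: the integral is a linear functional of the vertex values $(\alpha(v_0),\dots,\alpha(v_n))$, invariant under the $S_{n+1}$-action (which acts simplicially on $\Delta^{(1)}$ and carries barycenters to barycenters), hence of the form $c\sum_i\alpha(v_i)$, and the normalization $c=1/(n+1)$ drops out of $\int_{\Delta^{(1)}}1\,\dchi=\chi(\Delta^{(1)})=1$. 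This is shorter, uniform in dimension, and by the same token reproves the unsubdivided identity $\int_{\Delta}\alpha\,\dchi=\alpha(\hat{\Delta})$; what it forgoes is the finer combinatorial bookkeeping that the paper's cancellation scheme makes explicit. Your globalization is also sound and actually supplies details the paper leaves implicit in Corollary~\ref{cor:barycentric}: inclusion--exclusion holds for both sides because the integral is a sum over simplices and barycentric subdivision commutes with unions and intersections of subcomplexes, and splitting off a maximal simplex (with intersection $\partial\Delta$) gives a well-founded induction on the number of simplices. Likewise the reduction of general $n$ to $n=1$ by iteration is legitimate, since $\alpha^{(n)}$ coincides with $\alpha$ as a function on the underlying space and is simplicial on $X^{(n)}=(X^{(n-1)})^{(1)}$.
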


The proof is relegated to Appendix~\ref{apx:bary} since the noninvariance of the integral under subdivision turns out to have an entirely different explanation.

\section{Rewriting the Sum}

The tentative definition may be rewritten:
  \begin{align*}
    \int_X \alpha\,\dchi &= \sum_{\Delta \in X} (-1)^{\dim(\Delta)} \alpha(\hat{\Delta}) = \sum_v \alpha(v) \w(v)
  \end{align*}
where $v$ ranges over each \emph{vertex} of $X$ and where:
\begin{align*}
  \w(v) = \sum_i (-1)^i \frac1{i+1} \, \# \big\{ \text{$i$-dimensional simplices containing $v$} \big\}
\end{align*}
This number has a \emph{geometric interpretation.}

  \begin{definition*}[Banchoff \cite{banchoff-1967}]
    Let $X$ be a simplicial complex \emph{embedded in $\RR^N$} (and hence inheriting a metric from it).
    The \emph{curvature} of $X$ at a vertex $v$ is:
  \begin{align*}
    \kappa(v) = \sum_{\Delta\in X} (-1)^{\dim(\Delta)} \mathcal{E}(\Delta,v)
  \end{align*}
  where the \emph{excess angle} $\mathcal{E}(\Delta,v)$ at $v$ of a simplex $\Delta \subset \RR^N$ is:
  \begin{align*}
    \mathcal{E}(\Delta,v) = \frac1{\mrm{vol}(\mrm{S}^{N-1})} \int_{\mrm{S}^{N-1}} \Big[\langle \xi ,v\rangle \ge \langle\xi, x\rangle \text{for all $x$ in $\Delta$}\Big] \mrm{d}\xi
  \end{align*}
  where $\xi$ ranges over the unit sphere $\mrm{S}^{N-1} \subset \RR^N$, and $[P]=
  \begin{cases}
    1 & \text{if $P$} \\ 0 & \text{if $\neg P$}
  \end{cases}
$ is the Iverson bracket.
  \end{definition*}

\theoremstyle{plain}
\newtheorem*{thm-egrigium}{Simplicial Theorem Egrigium}
\begin{thm-egrigium}[{\cite[Thm.~3]{banchoff-1967}}]
  $\kappa(v)$ is intrinsic.
\end{thm-egrigium}

\begin{definition*}
  Given a simplicial complex $X$, let $\mrm{d}_X$ be the unique intrinsic metric which makes each simplex flat and gives each 1-simplex length 1.
Equivalently, let $\mrm{d}_X$ be the intrinsic metric induced by the embedding $X \into \RR^{\{ \text{vertices of $X$} \}}$ carrying a simplex $\Delta$ of $X$ to the convex hull of the vectors $\big\{\tfrac12{\sqrt2}\cdot v \,|\, v \in \Delta\big\}$ where we regard $v$ as a coordinate vector.
\end{definition*}

\begin{theorem}
  \label{thm:weight=curvature}
  For a vertex $v$ of a simplicial complex $X$, $\w(v)$ is equal to the curvature $\krv(v)$ of $X$ at $v$ with respect to the metric $\mrm{d}_X$.
\end{theorem}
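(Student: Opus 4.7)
The plan is to use the explicit embedding $X \into \RR^N$ (with $N$ the number of vertices of $X$) from the definition of $\mrm{d}_X$ and compute $\kappa(v)$ term by term. Banchoff's Simplicial Theorem Egrigium guarantees that the resulting curvature depends only on $\mrm{d}_X$, not on the embedding, so we are free to work with this particularly symmetric one. Under it, each $i$-simplex $\Delta$ of $X$ containing $v$ appears as the convex hull of the \emph{regular} simplex with vertices $\bigl\{\tfrac{\sqrt{2}}{2}\,e_w : w \in V(\Delta)\bigr\} \subset \RR^N$, all equidistant from the origin and from one another.

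The central step is to show that $\mathcal{E}(\Delta, v_j) = \tfrac{1}{i+1}$ at every vertex $v_j$ of such a regular $i$-simplex $\Delta$. Since $x \mapsto \langle \xi, x\rangle$ is affine, its maximum over $\Delta$ is attained at some vertex, so the sphere $S^{N-1}$ decomposes, up to a measure-zero locus of ties, into $i+1$ regions $R_0,\dots,R_i$ with
\begin{align*}
  R_j = \bigl\{\xi \in S^{N-1} : \langle \xi, v_j\rangle \ge \langle \xi, v_k\rangle \text{ for all } k\bigr\}.
\end{align*}
The symmetric group $S_{i+1}$ acts orthogonally on $\RR^N$ by permuting the $i+1$ coordinates indexed by $V(\Delta)$ (and fixing the rest); this action preserves $S^{N-1}$ and its measure while transitively permuting the $R_j$. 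Hence each $R_j$ has measure $\tfrac{1}{i+1}\,\mrm{vol}(S^{N-1})$, giving $\mathcal{E}(\Delta, v_j) = \tfrac{1}{i+1}$.

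Feeding this into Banchoff's formula and regrouping the sum by dimension yields
\begin{align*}
  \kappa(v) \;=\; \sum_{\Delta \ni v} (-1)^{\dim \Delta}\, \mathcal{E}(\Delta, v) \;=\; \sum_{i} (-1)^i \tfrac{1}{i+1}\, \#\bigl\{i\text{-simplices containing } v\bigr\} \;=\; \w(v).
\end{align*}
The only nonroutine step is the symmetry computation for the excess angles; I do not anticipate any real obstacle beyond making explicit the standard convention, implicit in Banchoff's original definition, that simplices not containing $v$ contribute zero to the curvature at $v$.
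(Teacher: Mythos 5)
Your proof is correct and is essentially the paper's own argument: both hinge on showing $\mathcal{E}(\Delta,v)=\tfrac1{\dim(\Delta)+1}$ for the regular simplices of the metric $\mrm{d}_X$ by symmetry, and then regrouping Banchoff's sum by dimension to recover $\w(v)$. The only cosmetic difference is that you obtain the normalization $\sum_j\mathcal{E}(\Delta,v_j)=1$ directly from the fact that almost every $\xi$ has a unique maximizing vertex, where the paper quotes it as $\sum_v\mathcal{E}(\Delta,v)=\chi(\Delta)=1$ --- the same fact.
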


  \begin{proof}
    For the metric $\mrm{d}_X$, the excess angle $\mc{E}(\Delta,v)$ is the same at each vertex $v$ of a simplex $\Delta$. Since $\sum_v \mc{E}(\Delta,v)=\chi(\Delta)=1$, it follows that $\mc{E}(\Delta,v)=1/(\dim(\Delta)+1)$. The result follows immediately.
  \end{proof}

\emph{So the integral we are after depends on the metric of the domain, not just its topology.}
This explains why the integral isn't invariant under subdivision:

\medskip
\noindent
\begin{minipage}{\linewidth}
  \begin{example}
    \mbox{}
    \begin{center}
      \includegraphics[width=110pt]{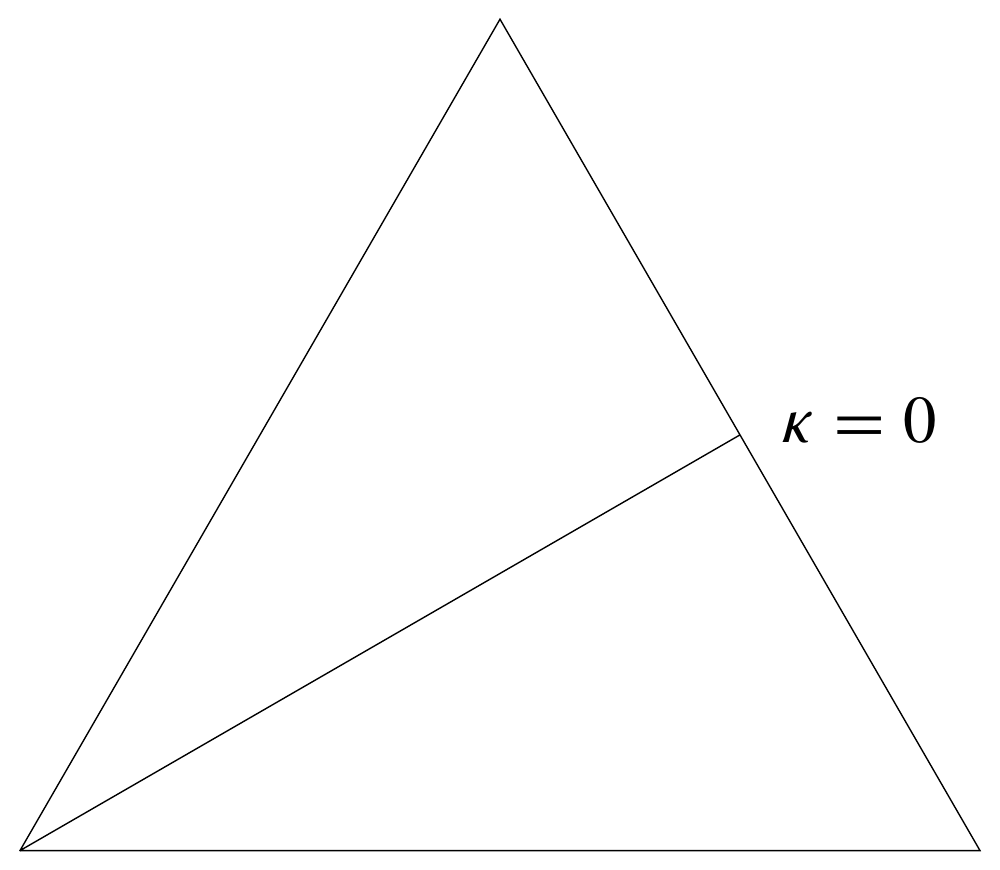}
      \hspace{50pt}
      \includegraphics[width=100pt]{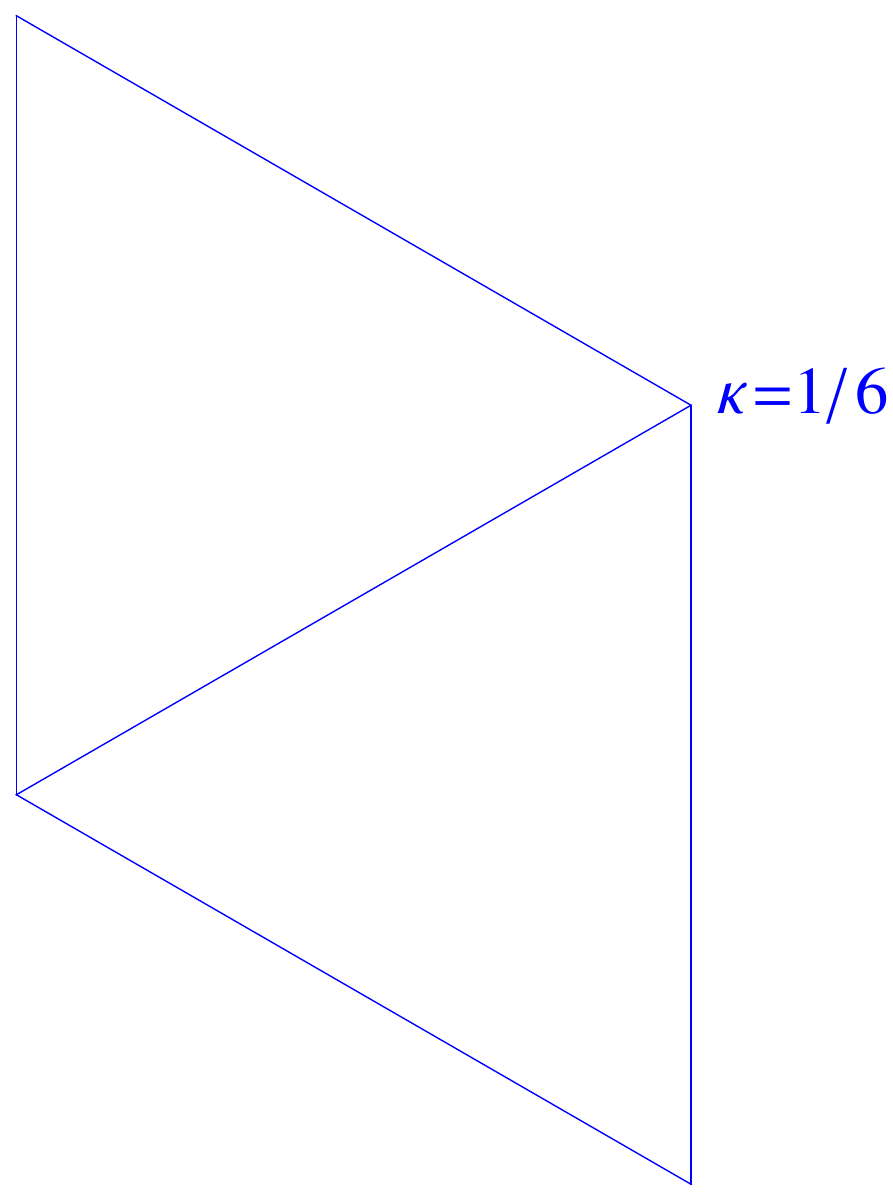} \\
      {We should have integrated like this} \hspace{40pt}
      {but integrated like this instead.}
    \end{center}
  \end{example}
\end{minipage}

\section{Improved Definition}

\theoremstyle{definition}
\newtheorem*{improved-def}{Improved Definition}
\begin{improved-def}
  For a simplicial complex $X$ embedded in $\RR^N$ and a simplicial map $\alpha:X\to\RR$, let:
    \begin{align*}
      \int_X \alpha\,\dchi = \sum_v \alpha(v) \kappa(v)
    \end{align*}
\end{improved-def}

That is, \emph{let Euler integration be integration with respect to curvature.} This makes a lot of sense in hindsight since the generalized Gauss-Bonnet theorem \cite{chern-1944} may be interpreted as saying that \emph{curvature is infinitesimal Euler characteristic}:

\newcommand{\Pf}{\ensuremath{\mathrm{Pf}}}

\theoremstyle{plain}
\newtheorem*{cgb-thm}{Generalized Gauss-Bonnet Theorem}
\begin{cgb-thm}
For a compact Riemannian manifold~$M^n$:
  \begin{align*}
    \chi(M) = \int_M (2\pi)^{-n} \Pf(\Omega)
  \end{align*}
  where $\Pf(\Omega)$ is the Pfaffian of the curvature 2-form $\Omega$ of $M$.

  More generally, for a compact Riemannian manifold~$M$ with boundary $\partial M$:
  \begin{align*}
    \chi(M) = \int_M (2\pi)^{-n} \Pf(\Omega) + \int_{\partial M} (2\pi)^{-n}H
  \end{align*}
  where $H$ is the \emph{``geodesic curvature''} of the boundary.
\end{cgb-thm}

\section*{The Importance of the Boundary Contribution}

Since the generalized Gauss-Bonnet theorem only applies to compact spaces, \textit{one should only perform curvature integration over compact domains.} So to compute the curvature integral of a function one should write it as a linear combination of continuous functions over compact domains. %

\begin{example}
  An open interval $X=(0,1)$ has curvature 0 yet has $\chi(X)=-1$. But if we write:
  \begin{align*}
    \int 1_{(0,1)} \,\dchi
    &= \int \big( 1_{[0,1]} - 1_{\{0\}} - 1_{\{1\}} \big) \,\dchi
  \end{align*}
  then curvature integration correctly computes:
  \begin{align*}
    \int_{[0,1]} 1\,\dk_{[0,1]} - \int_{\{0\}} 1\,\dk_{\{0\}} - \int_{\{1\}}1\,\dk_{\{1\}} &= \big(\tfrac12+\tfrac12\big)-1-1 = -1
  \end{align*}
  (The curvature of $[0,1]$ is concentrated at the endpoints, each having measure $1/2$.)
\end{example}
  
\section{Curvature Is as General as Euler Characteristic}

The Euler characteristic can be defined in great generality---for any space within any O-minimal theory \cite{vanDenDries-1998}. Curvature can be defined in the same generality. Indeed, Br\"ocker-Kuppe \cite{broecker-kuppe-2000} used Goresky-MacPherson's work \cite{smt88} on stratified Morse theory to define the curvature \emph{of any tamely stratified space.} This includes any space within any O-minimal theory. We briefly summarize their work.

Loosely speaking, a Morse function $f:Y\to\RR$ on a Whitney stratified space $Y$ is one which restricts to a classical Morse function on each stratum. (For the complete definition, see \cite[\S I.2.1]{smt88}.) The topological change in $Y$ near a critical point $y$ of $f$ with value $v=f(y)$ is described by the \emph{local Morse data:}
  \begin{align*}
    (P,Q) = \mrm{B}(y,\delta) \cap \Big(
    f^{-1}[v-\epsilon,v+\epsilon], \,
    f^{-1}[v-\epsilon]
    \Big)
  \end{align*}
  where $\mrm{B}(y,\delta)$ is a closed ball of radius $\delta$ centered at $y$. Its topological type is independent of $\epsilon$ and $\delta$ provided $0<\epsilon\ll \delta \ll 1$, and its Euler characteristic:
  \begin{align*}
    \chi(P,Q)=\chi(P)-\chi(Q)=1-\chi(Q)
  \end{align*}
  is called the \emph{index} of $f$ at $y$ and denoted $\alpha(f,y)$. If $y$ is not critical then the same construction gives $\alpha(f,y)=0$. If $Y$ is compact then:
  \begin{align}
    \chi(Y)=\sum_{y \in Y} \alpha(f,y)
    \label{alphasum}
  \end{align}

To define the curvature of a Whitney stratified space $Y \subset \RR^N$, Br\"ocker-Kuppe consider all possible projections $\mrm{h}_x(y)=-\langle x,y\rangle$ as $x$ ranges over the unit sphere $\mrm{S}^{N-1}\subset\RR^N$. If the stratification of $Y$ is \emph{tame} \cite[Def.~3.4]{broecker-kuppe-2000} then $\mrm{h}_x$ is a Morse function for almost all such $x$. (Every space within any O-minimal theory admits a tame stratification, see \cite[Ex.~3.6.d]{broecker-kuppe-2000}.) The \emph{curvature measure} of a Borel set $U$ is then the average contribution to the sum (\ref{alphasum}) of the points of $U$ as $f$ runs over all Morse projections $\mrm{h}_x$:
\begin{align*}
  \kappa_Y(U) = \frac1{\mrm{vol}(\mrm{S}^{N-1})} \int_{\mrm{S}^{N-1}} \sum_{y \in U} \alpha(\mrm{h}_x,y) \mrm{d}x
\end{align*}

\begin{example}
  A simplicial complex $X$ embedded in $\RR^N$ may be regarded as a tamely stratified space. Its curvature measure $\kappa_X$ is the discrete measure $\sum_{v \in X} \kappa(v) \delta_v$ where $\kappa(v)$ is the curvature defined by Banchoff and $\delta_v(U)=1_U(v)$ is the Dirac measure.
\end{example}

\begin{example}[Br\"ocker-Kuppe, Ex.~4.2.a]
  \mbox{}
  \begin{center}
    \includegraphics[width=160pt]{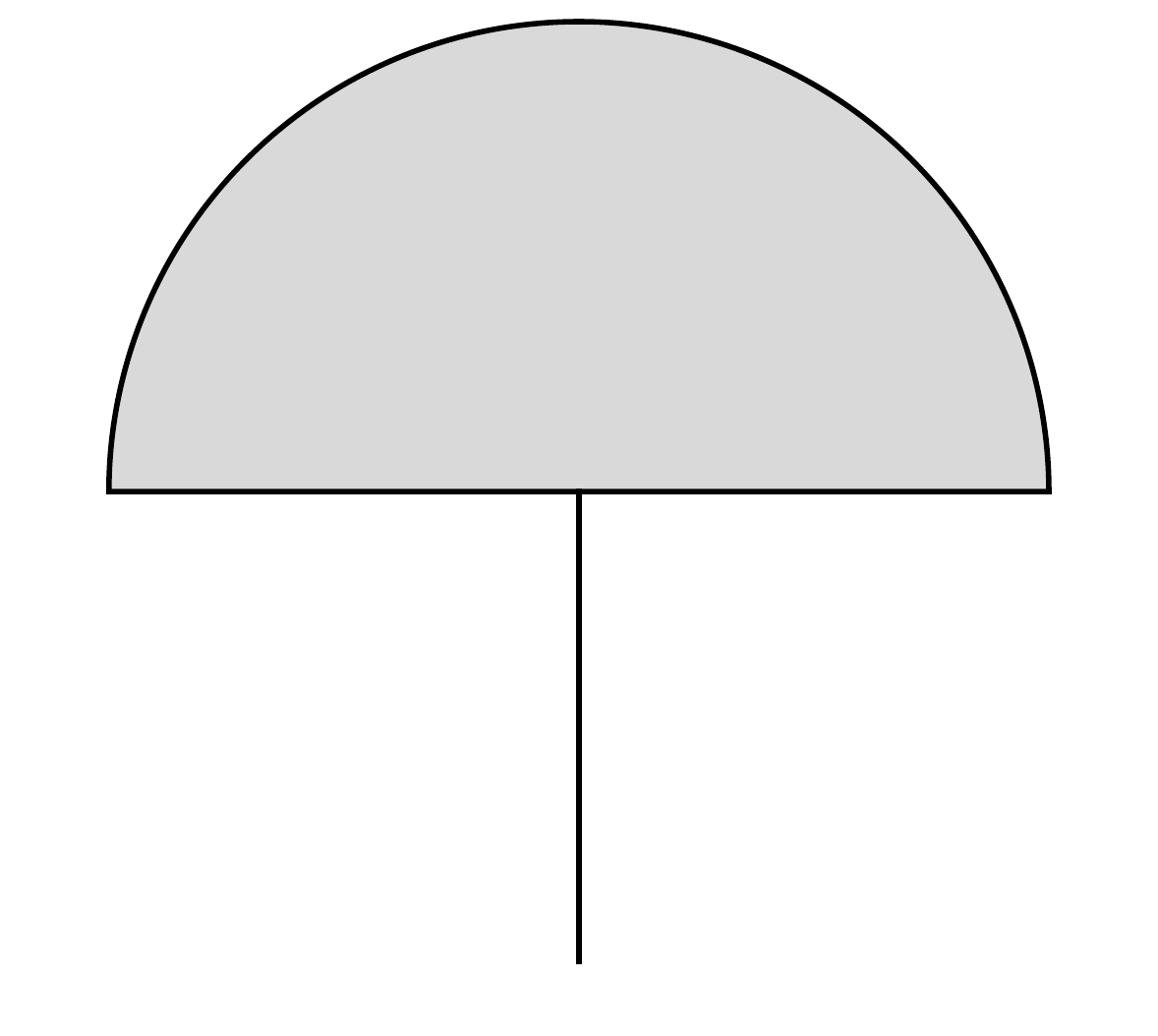}
    \includegraphics[width=160pt]{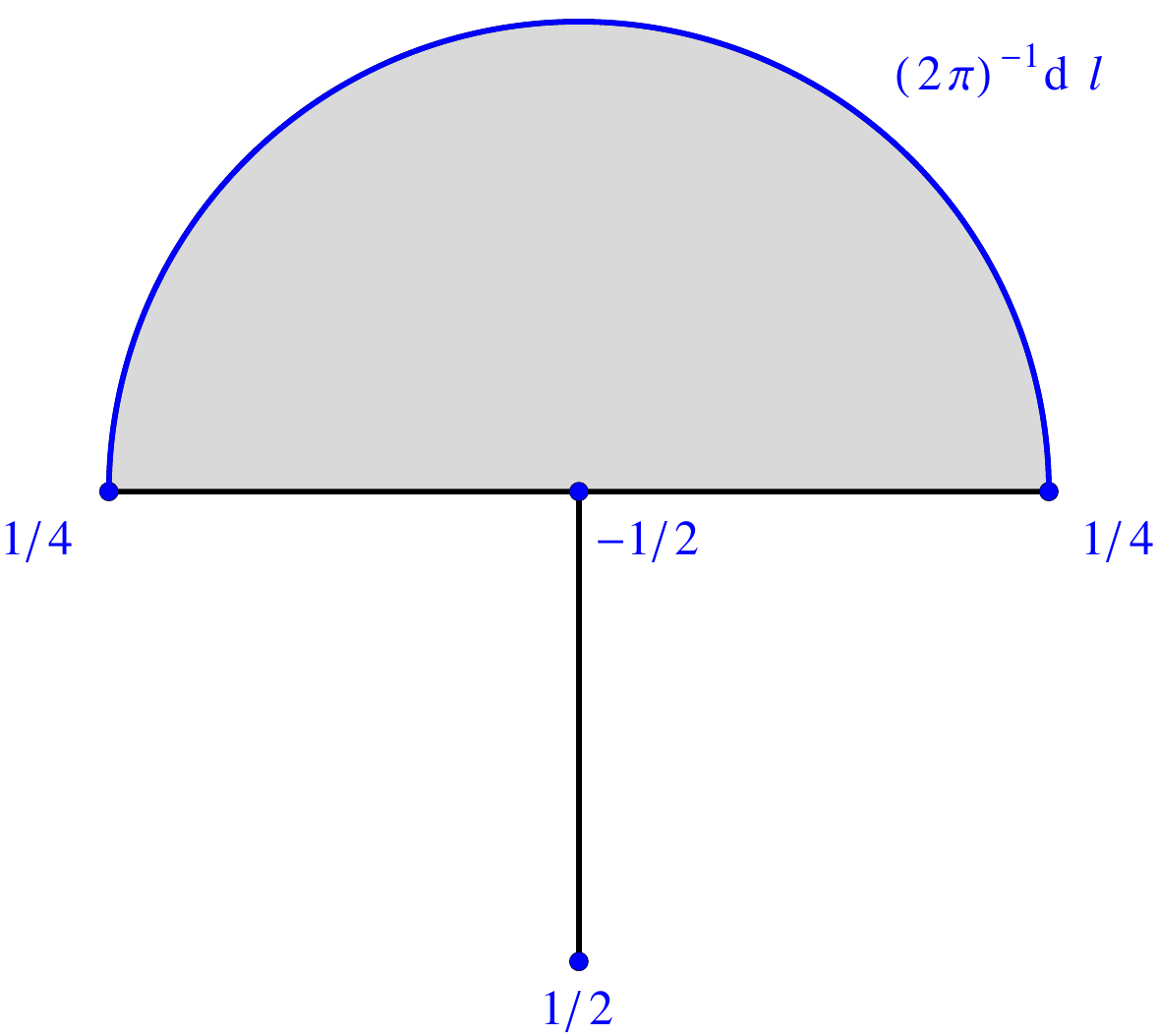}
  \end{center}
\end{example}

As these examples illustrate, the curvature measure $\kappa_Y$ reflects not just the curvature of the smooth strata of $Y$ but also the metric near singularities. This is because the local Morse data is the product of the tangential Morse data and the normal Morse data, which leads to a factorization of the index $\alpha(f,y)=\alpha_{\mrm{tan}}(f,y) \, \alpha_{\mrm{nor}}(f,y)$.

\theoremstyle{plain}
\newtheorem*{stratified-gauss-bonnet}{Stratified Gauss-Bonnet Theorem}
\begin{stratified-gauss-bonnet}[Br\"ocker-Kuppe, Prop.~4.1]
  If $Y \subset \RR^N$ is a compact tamely stratified space then $\kappa_Y(Y)=\chi(Y)$, that is:
  \begin{align*}
    \int 1_Y \, \dk_Y = \int 1_Y \, \dchi
  \end{align*}
\end{stratified-gauss-bonnet}

\theoremstyle{definition}
\newtheorem*{generalized-def}{Generalized Definition}
\begin{generalized-def}
  For a compact tamely stratified space $Y \subset \RR^N$ and a continuous function $\alpha:Y\to\RR$, let:
  \begin{align*}
    \int_Y \alpha\,\dchi = \int_Y \alpha\,\dk_Y
  \end{align*}
  where the right hand side is the Lebesgue integral of $\alpha$ with respect to the Br\"ocker-Kuppe curvature measure $\kappa_Y$.
\end{generalized-def}

The measure $\kappa_Y$ can always be expressed in terms of the canonical measures of the strata of $Y$:

\begin{proposition*}[Br\"ocker-Kuppe, Prop.~4.4]
  For each stratum $S$ of a tamely stratified space $Y \subset \RR^N$ there is a function $\kappa(Y,S):S\to\RR$ such that:
  \begin{align*}
    \kappa_Y(U) = \sum_S \int_{U \cap S} \kappa(Y,S) \mrm{d}S
  \end{align*}
  where $\mrm{d}S$ is the canonical measure on $S$.
\end{proposition*}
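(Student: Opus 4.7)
The plan is to factor the local index in Br\"ocker--Kuppe's definition of $\kappa_Y(U)$ via the tangential/normal decomposition $\alpha(\mrm{h}_x,y)=\alpha_{\mrm{tan}}(\mrm{h}_x,y)\cdot\alpha_{\mrm{nor}}(\mrm{h}_x,y)$ already highlighted in the text, split the sphere integral stratum by stratum, and then convert each piece into an integral over the stratum by a coarea argument along a stratified Gauss map.

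First I would partition $Y$ into its strata and exchange summation with integration to obtain
\[
\kappa_Y(U)=\frac{1}{\mrm{vol}(\mrm{S}^{N-1})}\sum_S\int_{\mrm{S}^{N-1}}\sum_{y\in U\cap S}\alpha_{\mrm{tan}}(\mrm{h}_x,y)\,\alpha_{\mrm{nor}}(\mrm{h}_x,y)\,\mrm{d}x.
\]
For a smooth stratum $S$ of dimension $k$, the Lagrange condition forces $y\in S$ to be a critical point of the restriction $\mrm{h}_x|_S$ only when $x$ lies in the unit sphere $\mrm{S}(N_yS)$ of the normal space at $y$. The natural object is therefore the unit normal bundle $\nu^1 S=\{(y,x):y\in S,\ x\in N_yS,\ |x|=1\}$, a smooth manifold of dimension $N-1$, together with the generalized Gauss map $G_S:\nu^1 S\to\mrm{S}^{N-1}$ sending $(y,x)$ to $x$. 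By Sard applied to $G_S$ and the tameness hypothesis, for almost every $x\in\mrm{S}^{N-1}$ the fibre $G_S^{-1}(x)\cap\pi_S^{-1}(U)$ is precisely the finite set of nondegenerate critical points of $\mrm{h}_x|_S$ in $U$, where $\pi_S:\nu^1 S\to S$ is the bundle projection.

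Next I would apply the signed area formula to $G_S$. At any such nondegenerate critical point, the Hessian of $\mrm{h}_x|_S$ coincides up to an overall sign with the shape operator of $S$ in direction $x$, so the tangential Morse index agrees with the sign of the Jacobian of $G_S$:
\[
\alpha_{\mrm{tan}}(\mrm{h}_x,y)=(-1)^{\mrm{ind}(\mrm{h}_x|_S,y)}=\mrm{sgn}(JG_S)(y,x).
\]
The signed area formula then rewrites the stratum's contribution as
\[
\int_{\mrm{S}^{N-1}}\sum_{y\in U\cap S}\alpha_{\mrm{tan}}(\mrm{h}_x,y)\,\alpha_{\mrm{nor}}(\mrm{h}_x,y)\,\mrm{d}x=\int_{\pi_S^{-1}(U\cap S)}\alpha_{\mrm{nor}}(\mrm{h}_x,y)\,JG_S(y,x)\,\mrm{d}(y,x),
\]
and Fubini along $\pi_S$ isolates the desired density
\[
\kappa(Y,S)(y):=\frac{1}{\mrm{vol}(\mrm{S}^{N-1})}\int_{\mrm{S}(N_yS)}\alpha_{\mrm{nor}}(\mrm{h}_x,y)\,JG_S(y,x)\,\mrm{d}x,
\]
against which integration over $U\cap S$ reproduces that stratum's contribution to $\kappa_Y(U)$.

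The main obstacle is to justify the area formula and the Fubini step in a merely tame (not globally smooth) setting, and to secure the measurability and boundedness of $\alpha_{\mrm{nor}}(\mrm{h}_x,y)$ as a function of the normal direction $x\in\mrm{S}(N_yS)$. Tameness supplies exactly the control needed: for almost every $x$ the projection $\mrm{h}_x$ is Morse on every stratum, the normal Morse data is locally constant on an open dense definable subset of $\mrm{S}(N_yS)$ determined by how the nearby strata meet, and the Jacobian of $G_S$ is a polynomial in the principal curvatures of $S$ in direction $x$. Once these measure-theoretic points are in hand, the identity $\kappa_Y(U)=\sum_S\int_{U\cap S}\kappa(Y,S)\,\mrm{d}S$ drops out of the stratum-by-stratum rewriting.
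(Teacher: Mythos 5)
This proposition is quoted from Br\"ocker--Kuppe (their Prop.~4.4); the paper offers no proof of its own, so the comparison is with that source. Your sketch---factoring $\alpha(\mrm{h}_x,y)$ into tangential and normal indices, passing to the unit normal bundle of each stratum with its Gauss map, converting the sphere integral by the signed area formula so that the tangential index absorbs the sign of the Jacobian, and invoking tameness for the a.e.\ Morse property, measurability of $\alpha_{\mrm{nor}}$, and integrability of the resulting density $\kappa(Y,S)$---is essentially the argument of Br\"ocker--Kuppe, so it is the intended proof in outline.
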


So to perform curvature integration one simply performs Riemannian integration along each stratum $S$, after first multiplying the integrand by the corresponding curvature function $\kappa(Y,S)$:
\begin{align*}
  \int_Y \alpha\,\dk_Y = \sum_S \int_S \alpha\cdot\kappa(Y,S)\,\mrm{d}S
\end{align*}

\theoremstyle{definition}
\newtheorem*{final-def}{Final Definition}
\begin{final-def}
  Given a finite collection $\{K_i\}$ of compact tamely stratified subsets of $Y$, and continuous functions $\alpha_i:K_i\to\RR$, consider their sum $\sum_i \alpha_i$ as a (possibly discontinuous) function on $Y$ and define:
  \begin{align*}
    \int_Y \Big(\sum_i \alpha_i\Big)\,\dchi = \sum_i \Big(\int_{K_i} \alpha_i\,\dk_{K_i}\Big)
  \end{align*}
\end{final-def}
\noindent This includes the classical Euler integral of simple functions as a special case.

\section{Fubini and Functoriality of Euler Integration}

The multiplicativity of the Euler characteristic $\chi(Y\times Z)=\chi(Y)\cdot\chi(Z)$ implies:
\theoremstyle{plain}
\newtheorem*{fubini-simple}{Fubini Theorem for Euler Integration}
\begin{fubini-simple}
\begin{align*}
  \int_Y \left( \int_Z s \,\dchi\right)\dchi = \int_{Y\times Z} s\,\dchi = \int_Z \left( \int_Y s \,\dchi\right)\dchi
\end{align*}
\end{fubini-simple}

In fact it implies much more:

\theoremstyle{plain}
\newtheorem*{functoriality-simple}{Functoriality of Euler Integration}
\begin{functoriality-simple}[cf \cite{macpherson74}]
 Integrating a simple function $s$ along the fibers of a map $f:X\to Y$ produces a simple function $f_*(s)$ on $Y$ with the same Euler integral as $s$:
\begin{align*}
  \int_X s\,\dchi = \int_Y\Big( \underbrace{\int_{f^{-1}(z)} s\,\dchi}_{f_*(s)}\Big)\,\dchi
\end{align*}
This pushforward $f_*$ is a homomorphism from the group of simple functions on $X$ to those on $Y$, and the association $f\mapsto f_*$ is \textit{functorial}:
\begin{align*}
  (f\circ g)_*=f_*\circ g_*
\end{align*}
\end{functoriality-simple}

So Euler integration, being the special case $c_*$ where $c:X\to\mrm{pt}$, extends to a functor.

\smallskip

This leads to a divide-and-conquer method for computing the Euler characteristic of a space $X$:
\begin{enumerate}
\item Fiber $X$ over another space $f:X\to Y$.
\item Compute the Euler characteristics of the fibers $f_*(1_X)(x)=\chi\big(f^{-1}(x)\big)$.
\item Amalgamate these numbers through Euler integration $\chi(X)=\int f_*(1_X)\,\dchi$.
\end{enumerate}

\pagebreak[3] %

\begin{example}
    \label{ex:sphere-projection}
    Consider the projection $f:\mrm{S}^2\to [-\pi/2,\pi/2]$ of the unit 2-sphere in $\mathbf{R}^2$ defined in spherical coordinates by $(r,\theta,\phi)\mapsto\theta$.
  \begin{center}
  \includegraphics[width=100pt]{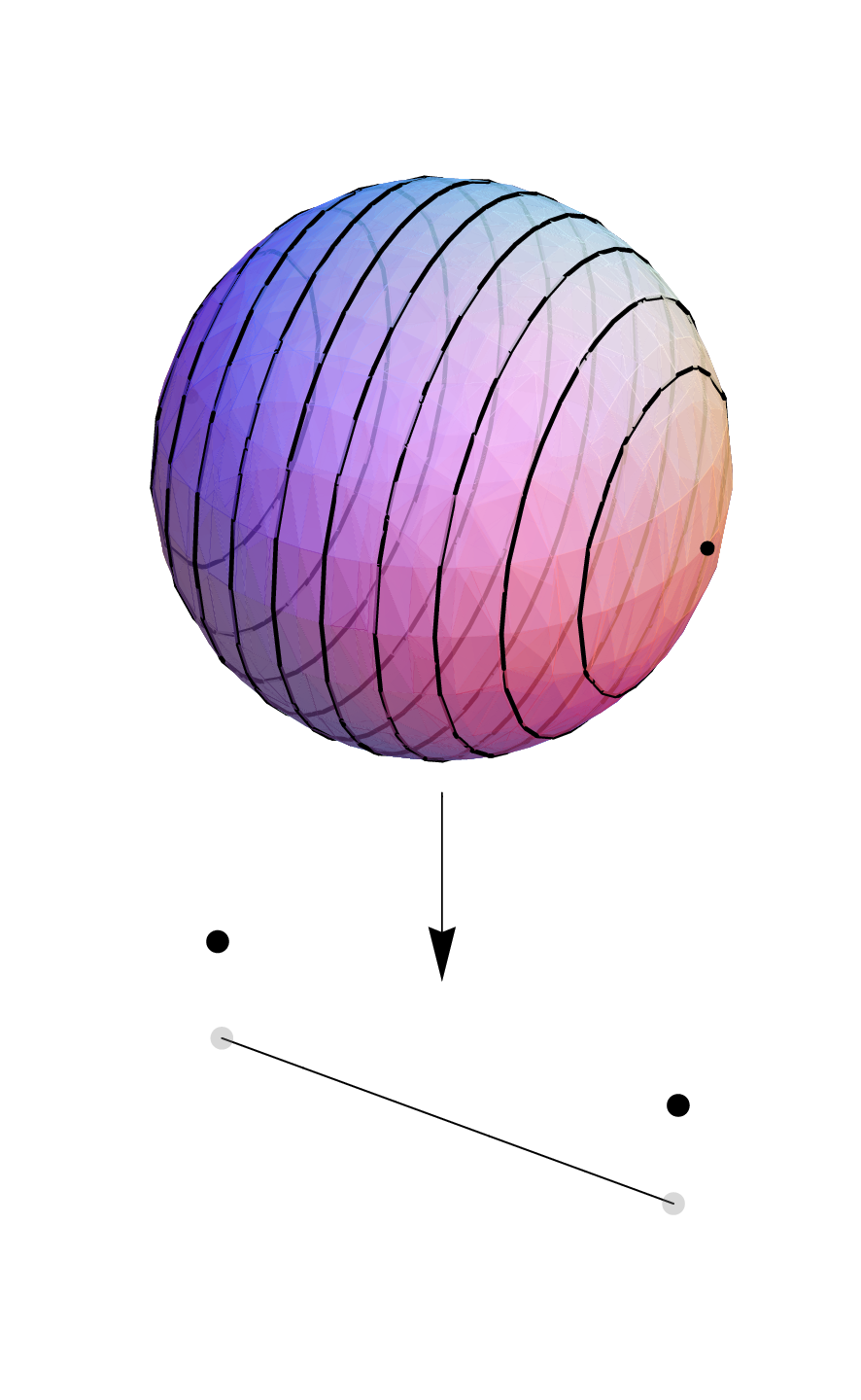}
  \end{center}
  The generic fiber has $\chi(\mathrm{S}^1)=0$, and the two exceptional fibers have $\chi(\pt)=1$. So:
  \begin{align*}
    \chi(\mrm{S}^2)=\int 1_{\mrm{S}^2}\,\dchi=\int f_*(1_{\mrm{S}^2})\,\dchi = \int \big(1_{\{2\pi\}}+1_{\{-2\pi\}}\big)\,\dchi=1+1=2
  \end{align*}
\end{example}

\section{Fubini and Functoriality of Curvature Integration}

Curvature integration also satisfies a Fubini theorem: The curvature measure of a (metric) product is the product of the curvature measures $\kappa_{Y\times Z}=\kappa_Y \times \kappa_Z$.
So the general Fubini theorem (\cite[Thm.~19 of \S12.4]{royden-1988}) applies:
\theoremstyle{plain}
\newtheorem*{fubini-curvature}{Fubini Theorem for Curvature Integration}
\begin{fubini-curvature}
\begin{align*}
  \int_Y \left( \int_Z \alpha\,\mrm{d}\kappa_Z \right) \mrm{d}\kappa_Y = \int_{Y\times Z} \alpha\,\mrm{d}(\kappa_{Y\times Z}) = \int_Z \left( \int_Y \alpha\,\mrm{d}\kappa_Y \right) \mrm{d}\kappa_Z
\end{align*}
\end{fubini-curvature}

Functoriality is more complicated. One can push forward curvature along a map $f:X\to Y$ in the sense of \emph{measure theory}. This is functorial but the resulting measure $f_*(\kappa_X)$ is typically \emph{not absolutely continuous} with respect to $\kappa_Y$. So it is unrealistic to hope for a \emph{function} $f_*(\alpha)$ such that $f_*(\alpha\cdot\kappa_X)=f_*(\alpha)\cdot\kappa_Y$.

\begin{example}
  Reconsider Example~\ref{ex:sphere-projection} from the curvature point of view. The unit 2-sphere in $\mathbf{R}^2$ has constant curvature. Its measure-theoretic pushforward smears into the interior of $[-2\pi,2\pi]$ where there is no curvature. So $f_*(\kappa_{\mrm{S}^2})$ cannot be expressed as $\beta\cdot\kappa_{[-\pi/2,\pi/2]}$ for any function $\beta$.
  \begin{center}
    \includegraphics[width=100pt]{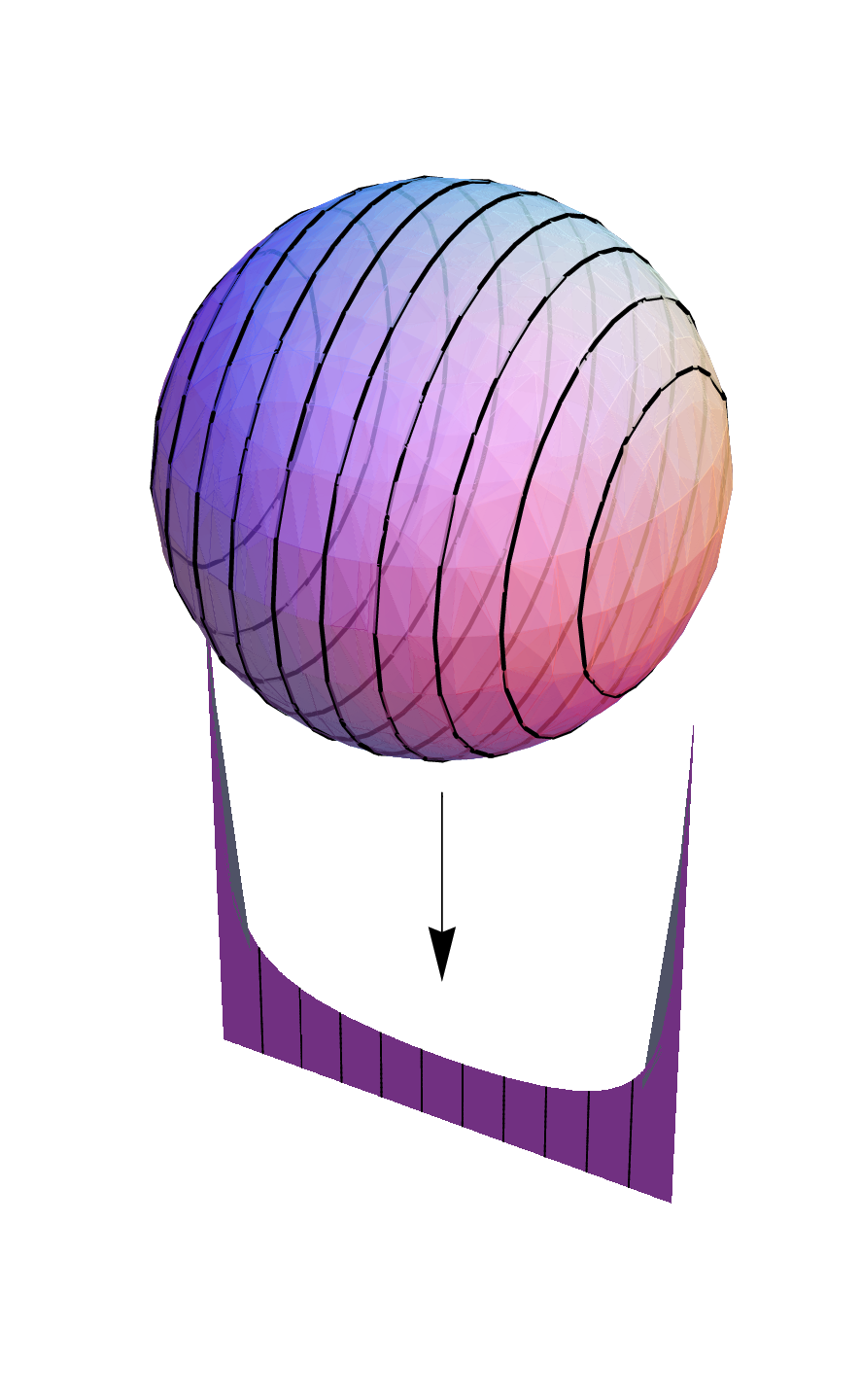}
  \end{center}
\end{example}

The reason absolute continuity fails is that the curvature of $X$ reflects not just the \emph{intrinsic} geometry of the fibers but also their \emph{extrinsic} geometry. Put differently, the curvature of $X$ need not split into vertical and horizontal parts.

\medskip

This nonsplitting of curvature can be described precisely for a \emph{submersion $\pi:(M,g)\to(B,g_B)$ of semi-Riemannian manifolds}, that is, a smooth map whose derivative $\mrm{D}\pi$ restricts to an isometry:
\begin{align*}
  \mrm{D}\pi|_{\mrm{H}M}:\mrm{H}M\to\mrm{T}B
\end{align*}
from the \emph{horizontal bundle} $\mrm{H}M$, i.e.\ the orthogonal complement of the \emph{vertical bundle} $\mrm{V}M=\ker \mrm{D}\pi\subset\mrm{T}M$, to the tangent bundle $\mrm{T}B$ of $B$.

The O'Neill tensors \cite{oneill-1966,gray-1967} are to a submersion what the second fundamental form is to an immersion. Karcher \cite{karcher-1999} formulated them elegantly in terms of the covariant derivatives of the orthogonal projections:
\begin{align*}
  \HH,\VV&:\mrm{T}M\to\mrm{T}M
\end{align*}
onto $\mrm{H}M$ and $\mrm{V}M$.

Let $\R$ be the Riemann tensor of the Levi-Civita connection on $\mrm{T}M$, and let $\R^\mrm{V}$ and $\R^\mrm{H}$ be the Riemann tensors of the induced connections on $\mrm{V}M$ and $\mrm{H}M$.

\theoremstyle{plain}
\newtheorem*{karchers-formula}{Karcher's Formula}
\begin{karchers-formula}[{\cite[Eqns.~(10)--(12)]{karcher-1999}}]
  For $V$ in $\mrm{V}M$, $H$ in $\mrm{H}M$, and $X,Y$ in $\mrm{T}M$:
\begin{align*}
  \R(X,Y)V &= -\overbrace{\R(X,Y)\HH\cdot V}^{\text{in $\mrm{H}M$}} \,+ \overbrace{\R^{\mrm{V}}(X,Y)V\,-\big[\nabla_X\HH,\nabla_Y\HH\big]V}^{\text{in $\mrm{V}M$}} \\
  \R(X,Y)H &= \phantom{-}\underbrace{\R(X,Y)\HH\cdot H}_{\text{in $\mrm{V}M$}} + \underbrace{\R^{\mrm{H}}(X,Y)H-\big[\nabla_X\HH,\nabla_Y\HH\big] H}_{\text{in $\mrm{H}M$}}
\end{align*}
\end{karchers-formula}
\noindent (Beware that $\R(X,Y)\HH\cdot Z=(\nabla^2_{X,Y}-\nabla^2_{Y,X})\HH\cdot Z$ does \emph{not} satisfy a cyclic Bianchi identity.)

This breaks the curvature form $\Omega$ of $M$ into blocks: Let $V_1,\dots,V_k$, $H_{k+1},\dots,H_n$ be a positively oriented orthonormal frame on $M$ consisting of vertical followed by horizontal vectors. In terms of this basis then:
\begin{corollary}
\begin{align*}
  \Omega(X,Y)
  &=
  \left[
    \arraycolsep=4pt\def\arraystretch{1.4}
    \begin{array}{c|c}
      g\big(\R^{\mrm{V}}(X,Y)V_i-[\nabla_X\HH,\nabla_Y\HH] V_i,V_j\big) & g\big(\R(X,Y)\HH \cdot H_i,V_j\big) \\ \hline
      -g\big(\R(X,Y)\HH \cdot V_i,H_j\big) & g\big(\R^{\mrm{H}}(X,Y)H_i-[\nabla_X\HH,\nabla_Y\HH]H_i,H_j\big)
    \end{array}
  \right]
\end{align*}
\end{corollary}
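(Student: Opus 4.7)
The plan is to read off each block of the matrix of $\Omega(X,Y)$ in the frame $V_1,\dots,V_k,H_{k+1},\dots,H_n$ directly from Karcher's formula, using only the orthogonality of $\mrm{H}M$ and $\mrm{V}M$. Writing $e_1,\ldots,e_n$ for this frame, the $(i,j)$-entry of the matrix representing the 2-form $\Omega(X,Y)$ is the inner product $g\bigl(\R(X,Y)e_i,e_j\bigr)$, so it suffices to evaluate this expression for the four combinations (vertical-vertical, horizontal-vertical, vertical-horizontal, horizontal-horizontal) of the arguments.

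For $e_i=V_i$ I would substitute the first line of Karcher's formula into the pairing. If $e_j=V_j$, orthogonality kills the horizontal summand $-\R(X,Y)\HH\cdot V_i$, leaving exactly the claimed top-left entry $g\bigl(\R^{\mrm V}(X,Y)V_i-[\nabla_X\HH,\nabla_Y\HH]V_i,V_j\bigr)$. If instead $e_j=H_j$, orthogonality now kills the two vertical summands, leaving $-g(\R(X,Y)\HH\cdot V_i,H_j)$, the bottom-left entry. Similarly, for $e_i=H_i$ I would use the second line of Karcher's formula: pairing with $V_j$ kills the two horizontal summands and produces the top-right entry $g(\R(X,Y)\HH\cdot H_i,V_j)$, while pairing with $H_j$ kills the vertical summand $\R(X,Y)\HH\cdot H_i$ and produces the bottom-right entry.

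There is no substantive obstacle once Karcher's formula is taken as input; the entire corollary is a book-keeping exercise in which each of the four blocks is singled out by discarding the complementary summands of Karcher's decomposition. The only things to watch are sign conventions -- in particular the parenthetical warning that $\R(X,Y)\HH$ does not satisfy a cyclic Bianchi identity, so one cannot permute its arguments freely -- and the indexing convention that identifies a matrix entry of $\Omega$ with $g(\R(\cdot,\cdot)e_i,e_j)$. With these pinned down, the four blocks drop out line by line from Karcher's two formulas.
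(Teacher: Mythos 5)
Your proposal is correct and is essentially the paper's own (implicit) argument: the corollary is obtained by pairing the two lines of Karcher's formula against the vertical and horizontal frame vectors and using orthogonality of $\mrm{V}M$ and $\mrm{H}M$ to discard the complementary summands in each of the four blocks. No gap to report.
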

\noindent (If $X,Y$ are vertical then the top left block is the Gauss equation of the fibers.\footnote{Indeed, $\nabla g=0$ implies that $g(\nabla_X\HH\cdot Y,Z)=g(Y,\nabla_X\HH\cdot Z)$ and hence that:
\begin{align*}
  g(\R(X,Y)V_i,V_j) &=
  g(\R^{\mathrm{V}}(X,Y)V_i,V_j) - g(\nabla_X\HH\cdot V_i,\nabla_Y\HH\cdot V_j) + g(\nabla_Y\HH\cdot V_i,\nabla_X\HH\cdot V_j) \\
  &=g(\R^{\mathrm{V}}(X,Y)V_i,V_j) - g(\II(X,V_i),\II(Y,V_j)) + g(\II(Y,V_i),\II(X,V_j))
\end{align*}
})

\begin{corollary}
  The tensor $\R\HH$ is the obstruction to splitting $\Omega$ into vertical and horizontal parts.
\end{corollary}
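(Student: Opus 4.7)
The plan is to read off the corollary immediately above, which explicitly writes $\Omega(X,Y)$ as a $2\times 2$ block matrix in the adapted frame $V_1,\dots,V_k,H_{k+1},\dots,H_n$. Saying that $\Omega$ \emph{splits into vertical and horizontal parts} means, by definition, that the operator $\Omega(X,Y):\mathrm{T}M\to\mathrm{T}M$ preserves the decomposition $\mathrm{T}M=\mathrm{V}M\oplus\mathrm{H}M$ for every $X,Y$, i.e.\ the two off-diagonal blocks in that matrix vanish. So it suffices to show that the vanishing of those off-diagonal blocks (for all $X,Y$) is equivalent to the vanishing of the tensor $\R\HH$.

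First I would handle the easy direction. Inspecting the block matrix, the upper-right and lower-left blocks are $g\bigl(\R(X,Y)\HH\cdot H_i,V_j\bigr)$ and $-g\bigl(\R(X,Y)\HH\cdot V_i,H_j\bigr)$ respectively. Hence if $\R\HH=0$ identically, both off-diagonal blocks vanish and $\Omega$ block-diagonalizes, leaving the vertical block (governed by the Gauss equation of the fibers) and the horizontal block from Karcher's formula.

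For the converse, assume that the off-diagonal blocks vanish for all $X,Y$ and all frame indices. Then $g\bigl(\R(X,Y)\HH\cdot H,V\bigr)=0$ for every horizontal $H$ and every vertical $V$, so $\R(X,Y)\HH\cdot H$ has no vertical component. But Karcher's Formula tells us that $\R(X,Y)\HH\cdot H$ lies in $\mathrm{V}M$, so it must be zero; an identical argument using the other off-diagonal block shows $\R(X,Y)\HH\cdot V=0$ for vertical $V$. Since every tangent vector is a sum of a vertical and a horizontal one, $\R(X,Y)\HH$ vanishes as an endomorphism of $\mathrm{T}M$, so $\R\HH=0$.

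Both directions are essentially bookkeeping once the block decomposition is in hand; the only step that requires any care is recognizing that ``$\Omega$ splits'' is the right intrinsic rephrasing of ``the two off-diagonal blocks are zero'' (so that the statement does not depend on the choice of adapted frame). This is immediate from the fact that the blocks are determined by the invariantly defined projections $\HH,\VV$ onto the horizontal and vertical bundles, so I do not expect any real obstacle.
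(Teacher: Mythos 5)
Your argument is correct and is exactly the reasoning the paper intends: the corollary is read off from the block decomposition of $\Omega(X,Y)$, whose off-diagonal blocks are precisely the components of $\R\HH$ (with Karcher's formula guaranteeing that $\R(X,Y)\HH\cdot H$ is vertical and $\R(X,Y)\HH\cdot V$ is horizontal, so testing against the frame detects them fully). Nothing further is needed.
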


\begin{corollary}
  If $\nabla\HH=0$ then $\Omega$ splits:
  \begin{align*}
    \Omega &=
    \left[
      \begin{array}{c|c}
        \Omega^\mathrm{V} & 0 \\ \hline
        0 & \Omega^\mathrm{H}
      \end{array}
    \right]
  \end{align*}
  where $\Omega^\mathrm{V}$ and $\Omega^\mathrm{H}$ are the curvature forms of the induced connections on $\mrm{V}M$ and $\mrm{H}M$.
\end{corollary}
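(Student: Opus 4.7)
The plan is to read the conclusion directly off the block-matrix form of $\Omega$ given in the preceding corollary, by showing that each ingredient of the off-diagonal blocks and of the ``correction'' terms in the diagonal blocks vanishes under the hypothesis $\nabla\HH=0$.

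First I would handle the commutator terms $[\nabla_X\HH,\nabla_Y\HH]$. These appear in both diagonal blocks of $\Omega$, and they vanish tautologically when $\nabla\HH=0$, since each factor is identically zero as an endomorphism of $\mrm{T}M$.

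Next comes the key step: showing that $\R(X,Y)\HH=0$ under the hypothesis, since this is precisely the tensor the previous corollary identifies as the obstruction to splitting. By the definition recalled in the parenthetical remark after Karcher's formula,
\begin{align*}
  \R(X,Y)\HH\cdot Z = \bigl(\nabla^2_{X,Y}-\nabla^2_{Y,X}\bigr)\HH\cdot Z,
\end{align*}
and for any vector field $Y$ one has $\nabla^2_{X,Y}\HH = \nabla_X(\nabla_Y\HH) - \nabla_{\nabla_X Y}\HH$. Since $\nabla\HH=0$ means $\nabla_W\HH=0$ for every $W$, both terms on the right vanish, so $\nabla^2_{X,Y}\HH=0$ and symmetrically $\nabla^2_{Y,X}\HH=0$. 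Hence $\R(X,Y)\HH=0$, which immediately kills the two off-diagonal blocks $g(\R(X,Y)\HH\cdot H_i,V_j)$ and $-g(\R(X,Y)\HH\cdot V_i,H_j)$ in the corollary.

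Finally I would substitute these vanishings back into the block form of $\Omega$: the top-left block reduces to $g(\R^{\mrm{V}}(X,Y)V_i,V_j)$, which by definition is the matrix of the curvature form $\Omega^\mrm{V}$ of the induced connection on $\mrm{V}M$ in the frame $V_1,\dots,V_k$, and the bottom-right block reduces to $g(\R^{\mrm{H}}(X,Y)H_i,H_j)$, the curvature form $\Omega^\mrm{H}$ in the frame $H_{k+1},\dots,H_n$. This is exactly the claimed block-diagonal decomposition. There is no real obstacle here; the work is all in the previous corollary, and the only subtle point worth double-checking is the second-covariant-derivative argument that $\nabla\HH=0$ forces $\R\HH=0$ (as opposed to merely being implied by it), which is what makes the hypothesis sufficient rather than just necessary.
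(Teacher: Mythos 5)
Your proposal is correct and follows essentially the same route the paper takes implicitly: substitute $\nabla\HH=0$ into the block formula for $\Omega$ from the preceding corollary, noting that the commutator terms vanish trivially and that $\R\HH=(\nabla^2_{X,Y}-\nabla^2_{Y,X})\HH$ vanishes because $\nabla\HH=0$ forces $\nabla^2\HH=0$. The explicit second-covariant-derivative check you include is the only step the paper leaves unstated, and you carry it out correctly.
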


\begin{corollary}
  If $\nabla\HH=0$ then $\Pf(\Omega)=\Pf(\Omega^\mathrm{V})\wedge\Pf(\Omega^\mathrm{H})$.
\end{corollary}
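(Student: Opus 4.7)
The plan is to reduce the claim to a purely algebraic fact---that the Pfaffian of a block-diagonal skew matrix factors as the wedge product of the block Pfaffians---and then to invoke the preceding corollary to put $\Omega$ in block-diagonal form. So this is essentially a one-line consequence of that corollary, the only subtlety being that the entries of $\Omega$ are 2-forms rather than scalars.

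Write $n=\dim M$ and let $k$ be the fiber dimension. By the preceding corollary, the hypothesis $\nabla\HH=0$ gives
\[
\Omega =
\left[
\begin{array}{c|c}
\Omega^{\mathrm{V}} & 0 \\ \hline
0 & \Omega^{\mathrm{H}}
\end{array}
\right]
\]
as a skew $n\times n$ matrix of 2-forms on $M$. Since any two 2-forms satisfy $\alpha\wedge\beta = (-1)^{2\cdot 2}\beta\wedge\alpha = \beta\wedge\alpha$, the entries of $\Omega$ lie in a commutative subalgebra of the exterior algebra, so the standard definition
\[
\Pf(A) \;=\; \frac{1}{2^{n/2}(n/2)!} \sum_{\sigma\in S_n}\mathrm{sgn}(\sigma)\, A_{\sigma(1)\sigma(2)}\wedge \cdots \wedge A_{\sigma(n-1)\sigma(n)}
\]
obeys the usual commutative-ring identities for the Pfaffian, with wedge product in place of multiplication.

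Applying this to our block-diagonal $\Omega$, a term in the Pfaffian sum is nonzero only if every pair $\{\sigma(2i-1),\sigma(2i)\}$ lies entirely within the vertical index block $\{1,\ldots,k\}$ or entirely within the horizontal index block $\{k+1,\ldots,n\}$, because the off-diagonal blocks vanish. Grouping permutations by how they distribute their pairs between the two blocks, the sum factors as a wedge product of the analogous sums for $\Omega^{\mathrm{V}}$ and $\Omega^{\mathrm{H}}$, yielding
\[
\Pf(\Omega) \;=\; \Pf(\Omega^{\mathrm{V}})\wedge\Pf(\Omega^{\mathrm{H}}).
\]
If $k$ and $n-k$ are both odd (the only nontrivial parity case, since $n$ must be even for the Pfaffian to be interesting), both sides vanish by the standard odd-size convention and the identity still holds.

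Because the argument is pure bookkeeping once the block form and the commutativity of even-degree forms are in hand, there is no real obstacle. The one place where one must pause---however briefly---is the observation that wedge products of 2-forms commute, which is precisely what allows the commutative-ring Pfaffian identity to be imported into the exterior algebra.
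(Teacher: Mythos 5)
Your proof is correct and is essentially the argument the paper intends: the paper states this as an immediate consequence of the preceding corollary (the block-diagonal splitting of $\Omega$ when $\nabla\HH=0$) together with the standard fact that the Pfaffian of a block-diagonal skew matrix with commuting (even-degree) entries factors as the wedge of the block Pfaffians. Your explicit handling of the commutativity of 2-forms and the odd-parity case simply spells out what the paper leaves unproved.
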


According to the generalized Gauss-Bonnet theorem then:

\begin{corollary}
  If $\nabla\HH=0$ and $\pi$ is proper then integrating along the fibers gives:
  \begin{align*}
    \pi_*\big(\Pf(\Omega)\big)&=\chi(\pi)\cdot\Pf(\Omega^B)
  \end{align*}
\end{corollary}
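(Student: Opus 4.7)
The plan is to combine the preceding splitting corollary with a fiberwise application of the generalized Gauss-Bonnet theorem, linked by the projection formula for integration along the fiber.

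First, since $\pi$ is a proper submersion, Ehresmann's lemma gives that $\pi$ is a smooth fiber bundle; in particular every fiber is a compact manifold and all fibers share the same Euler characteristic $\chi(\pi)$.

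Next I would use the hypothesis $\nabla\HH=0$ twice. On the one hand, the preceding corollary gives the factorization $\Pf(\Omega)=\Pf(\Omega^{\mathrm{V}})\wedge\Pf(\Omega^{\mathrm{H}})$. On the other hand, $\mrm{D}\pi|_{\mrm{H}M}:\mrm{H}M\to\mrm{T}B$ is a fiberwise isometry, and because $\HH$ is parallel it intertwines the connection induced on $\mrm{H}M$ from the Levi-Civita connection of $M$ with the Levi-Civita connection of $B$. Consequently $\Omega^{\mathrm{H}}=\pi^*\Omega^B$ and hence $\Pf(\Omega^{\mathrm{H}})=\pi^*\Pf(\Omega^B)$.

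By the projection formula for integration along the fiber,
\begin{align*}
\pi_*\bigl(\Pf(\Omega^{\mathrm{V}})\wedge\pi^*\Pf(\Omega^B)\bigr)=\pi_*\bigl(\Pf(\Omega^{\mathrm{V}})\bigr)\wedge\Pf(\Omega^B),
\end{align*}
so what remains is to show that $\pi_*\bigl(\Pf(\Omega^{\mathrm{V}})\bigr)$ is the constant function $\chi(\pi)$. Here I would use $\nabla\HH=0$ a third time: the second fundamental form of a fiber is captured by $\nabla\HH$ applied to vertical vectors (as in Karcher's formula), so the fibers are totally geodesic. The Gauss equation in the footnote to the first corollary then shows that $\Omega^{\mathrm{V}}$ restricted to a fiber $F_b$ is the intrinsic curvature form of $F_b$, and the generalized Gauss-Bonnet theorem applied on each compact fiber yields $\int_{F_b}\Pf(\Omega^{\mathrm{V}}|_{F_b})=\chi(F_b)=\chi(\pi)$, with the usual $(2\pi)$-normalization constants absorbed into the statement of the corollary.

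The main obstacle is step two: identifying $\Omega^{\mathrm{H}}$ with $\pi^*\Omega^B$. The geometric picture is that a parallel horizontal distribution furnishes a flat Ehresmann connection, so horizontal data descends to the base; but making this rigorous requires an honest connection isomorphism (not merely a metric one) between the induced connection on $\mrm{H}M$ and the pullback of the Levi-Civita connection of $B$, which is exactly what $\nabla\HH=0$ affords. Everything else---Ehresmann's theorem, the projection formula, the Gauss equation, and Gauss-Bonnet on the fibers---is classical.
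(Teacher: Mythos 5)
Your proposal is correct and follows essentially the route the paper intends, which it leaves as an immediate consequence of the preceding corollaries plus the generalized Gauss--Bonnet theorem: factor $\Pf(\Omega)=\Pf(\Omega^{\mathrm{V}})\wedge\Pf(\Omega^{\mathrm{H}})$, apply the projection formula, and evaluate $\pi_*\big(\Pf(\Omega^{\mathrm{V}})\big)$ fiberwise as $\chi(\pi)$ via Gauss--Bonnet on the compact, totally geodesic fibers. Your explicit verification that $\nabla\HH=0$ makes $\mrm{D}\pi|_{\mrm{H}M}$ connection-preserving, so that $\Omega^{\mathrm{H}}=\pi^*\Omega^B$ and the right-hand side can indeed be written with $\Pf(\Omega^B)$, supplies a step the paper leaves implicit and is exactly what is needed.
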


Note that if the fibers of $\pi$ are \emph{totally geodesic} then the second fundamental forms of the fibers:
\begin{align*}
  \II(U,V)=-\nabla_U\HH\cdot V %
\end{align*}
vanish. But this does not necessarily mean that $\nabla\HH=0$ (it may not vanish in the horizontal direction).

\section{Euler Calculus Is the Adiabatic Limit of Curvature Calculus}

Now shrink the fibers of a submersion $\pi:(M,g)\to(B,g_B)$ of semi-Riemannian manifolds by a factor of $(1-\epsilon)$:
\begin{align*}
  \gep{X,Y} &= g\big((1-\epsilon\VV)\cdot X, (1-\epsilon\VV)\cdot Y\big) & 0&\le\epsilon<1
\end{align*}
Let $V^\epsilon_i=\tfrac1{1-\epsilon}V_i$. Then $V^\epsilon_1,\dots,V^\epsilon_k,H_{k+1},\dots,H_n$ is a positively-oriented orthonormal moving frame on $(M,g_\epsilon)$.

\newcommand{\Rdefo}[1]{\ensuremath{20_#1}}
The methods of Karcher's paper (see \cite{mctague-2015}) may be used to show that:
\begin{proposition}
  \label{prop:defo}
  \begin{align*}
      g_\epsilon\big(\Rep(X,Y)V^\epsilon_i,V^\epsilon_j\big)
    &= g\big(\R^{\mrm{V}}(X,Y)V_i,V_j\big)-(1-\epsilon)^2\cdot g\big([\nabla_X\HH,\nabla_Y\HH] V_i,V_j\big) \\
    g_\epsilon\big(\Rep(X,Y)H_i,V^\epsilon_j\big) &=
        (1-\epsilon)\cdot \Big[ g\big(\R(X,Y)\HH\cdot H_i,V_j\big) \\
        &\hspace{6em} + \epsilon(2-\epsilon)\cdot g\big(\nabla_X\HH\cdot\nabla_{H_i}\HH\cdot Y - \nabla_Y\HH\cdot\nabla_{H_i}\HH\cdot X,V_j\big) \Big] \\
  g_\epsilon\big(\Rep(X,Y)H_i,H_j\big) &= \epsilon(2-\epsilon)\cdot g^B\big(R^B\big(\mrm{D}\pi\cdot X,\mrm{D}\pi\cdot Y\big)\big(\mrm{D}\pi\cdot H_i\big),\mrm{D}\pi\cdot H_j\big) \\ &\hspace{6em}+ (1-\epsilon)^2\cdot g\big(R(X,Y)H_i,H_j\big)
  \end{align*}
\end{proposition}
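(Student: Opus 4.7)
The plan is to compute the Levi--Civita connection $\nablep$ of $g_\epsilon$ from $\nabla$, then compute $\Rep$ from its definition $\Rep(X,Y)=[\nablep_X,\nablep_Y]-\nablep_{[X,Y]}$, and finally apply Karcher's formula to decompose the result into the three blocks demanded by the proposition.

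First I would observe that $g_\epsilon - g$ depends only on the vertical projection $\VV$, so the horizontal distribution $\mrm{H}M$ coincides for $g$ and $g_\epsilon$. Consequently the projections $\HH,\VV$ are unchanged by the deformation. Applying Koszul's formula to $g_\epsilon$ would then yield a tensorial correction $\nablep = \nabla + A_\epsilon$ expressible entirely in terms of $\nabla\HH$ (equivalently $\nabla\VV = -\nabla\HH$), with different $\epsilon$-dependent prefactors on the horizontal--horizontal, horizontal--vertical, and vertical--vertical blocks. This is the source of all $\epsilon$-dependent coefficients in the proposition.

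Next I would apply Karcher's formula in $(M,g_\epsilon)$. The vertical Riemann tensor $\R^{\mrm{V}}$ is unchanged by the deformation, since a uniform rescaling of each fiber by the constant factor $(1-\epsilon)$ is a homothety and does not alter its Levi--Civita connection. The horizontal Riemann tensor of the induced connection on $\mrm{H}M$, computed in the $g_\epsilon$ metric, is related to the base Riemann tensor $R^B$ via O'Neill's submersion formula; tracking the $\epsilon$-dependence of this relation produces the $\epsilon(2-\epsilon)$ coefficient on the base contribution. The brackets $[\nablep_X\HH,\nablep_Y\HH]$ produce the $(1-\epsilon)^2$ corrections via the block-dependent rescaling from step~1.

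Specialization to the orthonormal frame $V^\epsilon_i=\tfrac1{1-\epsilon}V_i$, $H_i$ then delivers the three stated formulas: the vertical--vertical block acquires no net $\epsilon$-rescaling (the $1/(1-\epsilon)^2$ from the two $V^\epsilon$ factors cancels the $(1-\epsilon)^2$ rescaling of $g_\epsilon$ on vertical vectors), leaving the pure deformation effect; the horizontal--vertical block inherits a single $(1-\epsilon)$ prefactor from the one $V^\epsilon$ slot; and the horizontal--horizontal block records the clean split between the $\epsilon(2-\epsilon)R^B$ and $(1-\epsilon)^2 R$ contributions.

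The main obstacle will be verifying the horizontal--horizontal formula. O'Neill's submersion formula expresses $R^B$ in terms of $R$ plus O'Neill A-tensor squared, so the deformation naturally mingles the two; one must show that when the $(1-\epsilon)$-scaled A-tensor corrections from Karcher's formula are gathered, they combine with O'Neill's formula to leave exactly $\epsilon(2-\epsilon)R^B + (1-\epsilon)^2 R$, with no residual cross terms. The consistency checks at $\epsilon=0$ (where $\Rep$ must equal $\R$) and in the adiabatic limit $\epsilon\to1$ (where horizontal sectional curvatures must approach those of $(B,g_B)$) pin down the coefficients and will guide the bookkeeping.
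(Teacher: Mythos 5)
Your route --- compute \(\nablep\) from the Koszul formula as \(\nabla\) plus an \(\epsilon\)-dependent correction built from \(\nabla\HH\), then push it through Karcher's projection formalism and evaluate on the frame \(V^\epsilon_i,H_j\) --- is essentially the paper's own approach: the paper gives no argument beyond invoking ``the methods of Karcher's paper'' and the companion preprint [McT15], which carries out exactly this computation. One small caution: your claim that \(\R^{\mrm{V}}\) is unchanged needs the Koszul computation in horizontal directions as well (the fiber-homothety argument only accounts for vertical derivatives of the induced connection on \(\mrm{V}M\)), though the conclusion itself is correct.
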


\begin{corollary}
  \begin{align*}
    \Omega^\epsilon
    =
  \left[
    \begin{array}{c|c}
      \Omega^\mathrm{V} + O(1-\epsilon)^2 & O(1-\epsilon) \\ \hline
      O(1-\epsilon) & \pi^*(\Omega^B) + O(1-\epsilon)^2
    \end{array}
  \right]
  \end{align*}
  where $\pi^*(\Omega^B)$ is the pullback of the curvature form of $B$. So as $\epsilon\to1$:
  \begin{align*}
    \Pf(\Omega^\epsilon) &\to \Pf(\Omega^\mathrm{V}) \wedge \pi^*(\Omega^B)
\intertext{If $\pi$ is proper then:}
    \pi_*\big(\Pf(\Omega^\epsilon)\big) &\to \chi(\pi)\cdot\Pf(\Omega^B)
  \end{align*}
\end{corollary}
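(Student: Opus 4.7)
The matrix form of $\Omega^\epsilon$ reads directly off Proposition~\ref{prop:defo}. In the vertical--vertical block,
\begin{align*}
g_\epsilon\big(\Rep(X,Y)V^\epsilon_i, V^\epsilon_j\big) = g\big(\R^{\mrm{V}}(X,Y)V_i, V_j\big) - (1-\epsilon)^2 \cdot g\big([\nabla_X\HH, \nabla_Y\HH]V_i, V_j\big),
\end{align*}
so this block equals $\Omega^{\mrm{V}} + O((1-\epsilon)^2)$. The mixed entries carry an overall factor of $(1-\epsilon)$ and are therefore $O(1-\epsilon)$. The horizontal--horizontal block equals $\epsilon(2-\epsilon) = 1-(1-\epsilon)^2$ times the pulled-back base curvature $g^B(R^B(\mrm{D}\pi\cdot X,\mrm{D}\pi\cdot Y)\mrm{D}\pi\cdot H_i,\mrm{D}\pi\cdot H_j)$, plus $(1-\epsilon)^2 \cdot g(\R(X,Y)H_i,H_j)$; this is $\pi^*(\Omega^B) + O((1-\epsilon)^2)$.

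The Pfaffian of a $2n\times 2n$ skew-symmetric matrix is a homogeneous polynomial of degree $n$ in its entries, hence continuous; for a block skew-symmetric matrix with vanishing off-diagonal blocks it factors as the wedge product of the Pfaffians of the diagonal blocks. In the Pfaffian expansion each term using $k$ off-diagonal entries contributes at order $(1-\epsilon)^k$, so as $\epsilon\to1$ only the $k=0$ terms survive and $\Pf(\Omega^\epsilon) \to \Pf(\Omega^{\mrm{V}}) \wedge \pi^*(\Pf(\Omega^B))$ (the right-hand side being what the statement's shorthand $\Pf(\Omega^{\mrm{V}})\wedge\pi^*(\Omega^B)$ abbreviates). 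The convergence is uniform on compact subsets of $M$.

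For the fiber integration, apply the projection formula $\pi_*(\omega \wedge \pi^*\eta) = \pi_*(\omega) \wedge \eta$ with $\omega = \Pf(\Omega^{\mrm{V}})$ and $\eta = \Pf(\Omega^B)$. The induced connection on $\mrm{V}M$ restricts on each fiber to that fiber's intrinsic Levi-Civita connection, since both are obtained as the vertical projection of $\nabla^M$. Hence $\Pf(\Omega^{\mrm{V}})$ restricts fiberwise to the intrinsic Pfaffian, and because $\pi$ is proper each fiber is compact; Gauss--Bonnet then gives $\pi_*(\Pf(\Omega^{\mrm{V}}))(b) = \chi(\pi^{-1}(b)) = \chi(\pi)$ once the standard $(2\pi)^k$ normalization is absorbed into the notation. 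The main technical point is interchanging the limit $\epsilon\to1$ with $\pi_*$: this follows from Ehresmann's theorem, which exhibits the proper submersion as a locally trivial bundle, reducing the question to uniform convergence on a compact model fiber, where dominated convergence applies.
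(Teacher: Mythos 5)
Your proposal is correct and follows exactly the route the paper intends: the paper states this as an immediate corollary of Proposition~\ref{prop:defo} (read off the blocks, note $\epsilon(2-\epsilon)=1-(1-\epsilon)^2$, let the off-diagonal $O(1-\epsilon)$ terms die in the Pfaffian, then apply the fiberwise Gauss--Bonnet argument of the preceding corollaries). Your added details --- the projection formula, the identification of the restricted vertical connection with the fibers' Levi--Civita connections, and Ehresmann plus dominated convergence to justify exchanging $\epsilon\to1$ with $\pi_*$ --- are exactly the steps the paper leaves implicit.
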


\begin{definition}
  There is a sensible notion of ``stratified Riemannian submersion''.
\end{definition}

\begin{proposition}
  \label{prop:strat-sub}
  If $f:Y\to B$ is a proper stratified Riemannian submersion then as $\epsilon\to1$:
  \begin{align*}
    f_*(\kappa^\epsilon_Y) \to \chi(f)\cdot\kappa_B
  \end{align*}
\end{proposition}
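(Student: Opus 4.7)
The plan is to reduce the stratified claim to the smooth Riemannian version (the last corollary of Proposition~\ref{prop:defo}) stratum by stratum, using the Br\"ocker--Kuppe decomposition $\kappa^\epsilon_Y = \sum_S \kappa^\epsilon(Y,S)\,\mathrm{d}S^\epsilon$ together with Fubini for curvature integration. First one must unpack the definition: a proper stratified Riemannian submersion $f:Y\to B$ should carry each stratum $S$ of $Y$ onto a stratum $f(S)$ of $B$ so that $f|_S:S\to f(S)$ is a smooth proper Riemannian submersion and so that the normal cone structure is preserved. The adiabatic metric $g_\epsilon$ is then defined stratum-by-stratum from the vertical/horizontal decomposition of each $f|_S$, with the cone structure transverse to the strata held fixed.

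Next, since pushforward of measures is linear and Br\"ocker--Kuppe gives $\kappa^\epsilon_Y=\sum_S \kappa^\epsilon(Y,S)\,\mathrm{d}S^\epsilon$, one has $f_*(\kappa^\epsilon_Y)=\sum_S (f|_S)_*\bigl(\kappa^\epsilon(Y,S)\,\mathrm{d}S^\epsilon\bigr)$. The density $\kappa^\epsilon(Y,S)$ factors (via the tangential/normal splitting $\alpha=\alpha_\mathrm{tan}\cdot\alpha_\mathrm{nor}$ of Morse indices) as a tangential piece --- essentially $\Pf(\Omega_S^\epsilon)/(2\pi)^{\dim S}$, the Pfaffian of the intrinsic curvature form of $(S,g_\epsilon)$ --- times a normal piece $\kappa^\mathrm{nor}(Y,S)$ which depends only on the transverse link of $S$ in $Y$ and is therefore unchanged by shrinking the fibers of $f$. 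Applying Fubini for curvature integration on $S$ and then the corollary of Proposition~\ref{prop:defo} to the smooth Riemannian submersion $f|_S$ (with its own vertical subbundle inside $\mathrm{T}S$), one obtains the limit
\begin{align*}
  (f|_S)_*\bigl(\Pf(\Omega_S^\epsilon)\cdot\kappa^\mathrm{nor}(Y,S)\bigr)
  \;\longrightarrow\;
  \chi\bigl(\text{fiber of }f|_S\bigr)\cdot\kappa^\mathrm{nor}(Y,S)\cdot\Pf(\Omega^{f(S)})
\end{align*}
as $\epsilon\to 1$. Summing over all strata $S$ of $Y$ lying above a fixed stratum $S_B$ of $B$, the Euler characteristics of the fiber strata assemble --- via the additivity of $\chi$ on a stratification of the fiber $f^{-1}(b)$ --- into $\chi\bigl(f^{-1}(b)\bigr)=\chi(f)(b)$, and the remaining factors reassemble via the Br\"ocker--Kuppe formula for $B$ into $\kappa_B$.

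The main obstacle is making precise the compatibility of normal Morse data under $f$. The argument above uses implicitly that the normal density $\kappa^\mathrm{nor}(Y,S)$ at a point $y\in S$ pushes forward to the normal density $\kappa^\mathrm{nor}(B,f(S))$ at $f(y)$, weighted by the Euler characteristic of the vertical part of the link. This is really a stratified version of the vertical/horizontal block decomposition of the curvature form from the corollary of Karcher's formula: one needs the link of $S$ in $Y$ to be itself stratified as a Riemannian submersion over the link of $f(S)$ in $B$, with vertical fiber exactly the link of $S\cap f^{-1}(b)$ inside $f^{-1}(b)$. Granting this --- which is really a condition built into the right definition of ``stratified Riemannian submersion'' above --- the index factorization $\alpha=\alpha_\mathrm{tan}\cdot\alpha_\mathrm{nor}$ combined with the multiplicativity $\chi(\text{total link})=\chi(\text{vertical link})\cdot\chi(\text{horizontal link})$ closes the argument. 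Verifying that tame stratifications can always be refined to this form, and checking the normal factorization in sufficient generality, is where the real work lies; the adiabatic limit itself contributes only the smooth statement already established.
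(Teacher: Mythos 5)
Be aware first of what the paper itself does here: Proposition~\ref{prop:strat-sub} is \emph{not} proved in the text. The paper only records a heuristic --- Br\"ocker--Kuppe curvature is the average stratified Morse index of the projections $\mrm{h}_x$ over the sphere of directions, that sphere of directions decomposes as a join compatible with the product/fiber structure, and the adiabatic deformation ``distorts the sense of average'' along the join flow --- and then explicitly says the author \emph{believes} this can be dressed up into a proof. So your route (stratum-by-stratum reduction to the smooth adiabatic corollary of Proposition~\ref{prop:defo}, via the Br\"ocker--Kuppe density decomposition and a tangential/normal factorization) is genuinely different in mechanism from the paper's integral-geometric sketch. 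Unfortunately it contains a gap that is not merely technical, and it sits exactly where the difficulty of the proposition lives.

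The central claim that $\kappa^\epsilon(Y,S)$ factors as the intrinsic Gauss--Bonnet density $(2\pi)^{-\dim S}\Pf(\Omega^\epsilon_S)$ of the stratum times an $\epsilon$-independent normal density is false in general. The factorization $\alpha=\alpha_{\mrm{tan}}\,\alpha_{\mrm{nor}}$ holds pointwise for a \emph{fixed} Morse projection, but the density $\kappa(Y,S)$ arises by averaging over all directions $x\in\mrm{S}^{N-1}$, and that average does not decouple into an intrinsic tangential factor and a normal factor: already for a manifold with boundary the boundary-stratum density is the geodesic-curvature term $H$, which is extrinsic (second-fundamental-form data), not an intrinsic Pfaffian of the stratum times a link invariant; and the paper's umbrella example stresses that $\kappa_Y$ on a stratum reflects the metric near the singularities. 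Moreover the ``normal'' contribution is \emph{not} unchanged by shrinking the fibers of $f$: the deformation $g_\epsilon$ distorts the links of the strata and, more importantly, distorts the angular measure with which tangential and normal directions are weighted in the averaging --- this distortion is precisely what the paper's join/flow discussion is trying to control, and it is the actual content of the proposition, whereas the smooth adiabatic limit you invoke is the easy part. Even granting your factorization, the next step fails as stated: you push forward $\Pf(\Omega^\epsilon_S)$ multiplied by a function $\kappa^{\mrm{nor}}(Y,S)$ that varies along the fibers of $f|_S$, so the limit of $(f|_S)_*$ is the fiberwise curvature integral of $\kappa^{\mrm{nor}}$ against $\Pf(\Omega^{\mrm{V}})$, not $\chi(\mrm{fiber})\cdot\kappa^{\mrm{nor}}$ unless you prove constancy along fibers or localization; and since the fibers of $f$ restricted to open strata are noncompact, their Gauss--Bonnet integrals do not individually compute Euler characteristics, so the final assembly into $\chi\big(f^{-1}(b)\big)\cdot\kappa_B$ requires cancellation of boundary/normal cross-terms (and a careful choice between ordinary and compactly supported $\chi$ in the ``additivity'' step) that the proposal does not supply. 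To make your route work you would need, at minimum, a precise definition of stratified Riemannian submersion in which the link of each stratum fibers compatibly over the link of its image, plus a proof that the averaged normal data converges under $\epsilon\to1$ to the corresponding data for $B$ --- which is essentially the same missing analysis the paper defers.
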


$\PP^{m+n+1}$ may be described as the \emph{join} $\PP^m\star\PP^n$ via the orbit structure of the action of $\PP^1$ (regarded as commutative monoid) on $\PP^{m+n+1}$ defined in homogeneous coordinates by:
\begin{align*}
  [\epsilon_0,\epsilon_1]\cdot[x_0,\dots,x_{m+n+1}] \mapsto \big[\epsilon_0x_0,\dots,\epsilon_0x_m,\,\epsilon_1x_{m+1},\dots,\epsilon_1x_{m+n+1}\big]
\end{align*}
The action fixes the images of the noninvertible points $[1,0]$ and $[0,1]$ of $\PP^1$:
\begin{align*}
  \PP^m &= [1,0]\cdot \mrm \PP^{m+n+1} = \{[x_0,\dots,x_m,0,\dots,0]\} \\
  \PP^n &= [0,1]\cdot \mrm \PP^{m+n+1} = \{[0,\dots,0,x_{m+1},\dots,x_{m+n+1}]\}
\end{align*}
and flows the rest of the points along the join lines.

The measure of $\PP^{m+n+1}$ is the join of the measures of $\PP^m$ and $\PP^n$ in the sense that:
\begin{align*}
  \mu_{\PP^{m+n+1}}(U\star V) = \mu_{\PP^m}(U)\cdot\mu_{\PP^n}(V)
\end{align*}

\begin{lemma}
  Away from $\PP^m\cup\PP^n$:
  \begin{align*}
    \alpha(X\times Y) = \alpha(X)\star \alpha(Y)
  \end{align*}
\end{lemma}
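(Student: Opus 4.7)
The plan is to exploit the fact that $\PP^{m+n+1}\setminus(\PP^m\cup\PP^n)$ is the total space of a natural fiber bundle $\pi$ over $\PP^m\times\PP^n$, whose fibers are the free orbits of the $\PP^1$-action. Under $\pi$, the join $X\star Y$ (restricted to the free locus) is exactly the preimage $\pi^{-1}(X\times Y)$, so the lemma reduces to a statement about the compatibility of $\alpha$ with this bundle structure.

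First, I would define $\pi$ explicitly in the homogeneous coordinates supplied in the excerpt: for $p=[x_0,\dots,x_{m+n+1}]$ with both $[x_0,\dots,x_m]$ and $[x_{m+1},\dots,x_{m+n+1}]$ nonzero, set
\[
\pi(p)=\big([x_0,\dots,x_m,0,\dots,0],\,[0,\dots,0,x_{m+1},\dots,x_{m+n+1}]\big).
\]
These are the two images of $p$ under the noninvertible points $[1,0]$ and $[0,1]$ of $\PP^1$, equivalently the two endpoints of the $\PP^1$-orbit through $p$. I would check that $\pi$ is a smooth fiber bundle with fiber the invertible part $\PP^1\setminus\{[1,0],[0,1]\}\cong\RR$, and that on each fiber the invertible part of $\PP^1$ acts freely and transitively.

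Next, directly from the definition of $X\star Y$ as the union of orbits through pairs $(x,y)\in X\times Y$, one obtains $\pi^{-1}(X\times Y)=(X\star Y)\setminus(X\cup Y)$. Combined with the free transitive action on fibers, this reduces the claim to a fiber-local statement: the contribution of $\alpha$ along a single free orbit is determined (by equivariance) entirely by the orbit's endpoints, so $\alpha$ applied to $\pi^{-1}(X\times Y)$ pulls back to $\alpha(X)$ and $\alpha(Y)$ on the two base factors and assembles via $\star$. The multiplicativity of the measure on joins, $\mu_{\PP^{m+n+1}}(U\star V)=\mu_{\PP^m}(U)\cdot\mu_{\PP^n}(V)$, plays the role of the book-keeping identity that lets the two factors be separated cleanly.

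The main obstacle will be this last descent step, namely establishing that $\alpha$ is in fact natural under $\pi$ with a trivial fiber contribution. Concretely, I would need to show that any two points lying on a common $\PP^1$-orbit contribute the same ``endpoint data'' to $\alpha$, which amounts to $\PP^1$-equivariance of the local construction defining $\alpha$. I would verify this by unwinding the homogeneous-coordinate formula for the action and comparing the local Morse/curvature data at two points of the same orbit, using that the orbit is an arc whose tangential and normal structure is the same at every interior point. Once this invariance is in hand, the identification $\pi^{-1}(X\times Y)=(X\star Y)\setminus(X\cup Y)$ yields the desired equality on the free stratum.
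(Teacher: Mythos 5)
First, a caveat on the comparison: the paper states this lemma without any proof at all---it is part of the heuristic sketch that the author only ``believes can be dressed up'' to prove Proposition~\ref{prop:strat-sub}---so there is no written argument to measure you against. Evaluated on its own terms, though, your proposal starts from a misreading of the statement. In this section $\PP^m$, $\PP^n$ and $\PP^{m+n+1}=\PP^m\star\PP^n$ are the (projectivized) spaces of \emph{directions of projection} in the Br\"ocker--Kuppe construction, while $X$ and $Y$ are the embedded stratified spaces themselves (say $X\subset\RR^{n+1}$, $Y\subset\RR^{m+1}$, so $X\times Y\subset\RR^{m+n+2}$); $\alpha(X)$ is the index datum $\xi\mapsto\alpha(\mrm{h}_\xi,\cdot)$, a function of the direction and the point, as the subsequent corollary's formula $\kappa_{X\times Y}(U\times V)=\int_{\PP^{m+n+1}}\sum_{U\times V}\alpha(X\times Y)$ makes explicit. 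Your proposal instead treats $X$ and $Y$ as subsets of $\PP^m$ and $\PP^n$ and reduces the lemma to the set-theoretic identity $\pi^{-1}(X\times Y)=(X\star Y)\setminus(X\cup Y)$; but the lemma is an identity of index \emph{functions}, not of subsets, so that identification does not address it.

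Your bundle map $\pi:\PP^{m+n+1}\setminus(\PP^m\cup\PP^n)\to\PP^m\times\PP^n$ is nevertheless the right way to formalize the right-hand side: a direction away from $\PP^m\cup\PP^n$ is $[\cos\theta\,\xi_1:\sin\theta\,\xi_2]$ with $\xi_1,\xi_2$ unit directions for the two factors and $\theta\in(0,\pi/2)$, and the lemma asserts $\alpha(\mrm{h}_\xi,(p,q))=\alpha(\mrm{h}_{\xi_1},p)\cdot\alpha(\mrm{h}_{\xi_2},q)$, i.e.\ the index is constant along open join lines and equals the product of the endpoint indices. The genuine content, which your proposal explicitly defers (``the main obstacle''), is exactly this: (i) $\mrm{h}_\xi$ restricted to $X\times Y$ splits as $\cos\theta\,\mrm{h}_{\xi_1}\oplus\sin\theta\,\mrm{h}_{\xi_2}$, a sum of functions of the separate factors; (ii) positive rescaling of a Morse function does not change its local Morse data; (iii) by Goresky--MacPherson, the local Morse data of $f\oplus g$ at $(p,q)$ is the product of the local Morse data of $f$ at $p$ and of $g$ at $q$, so the indices $1-\chi(Q)$ multiply; and one must also check that $\mrm{h}_\xi$ is Morse for almost every such $\xi$. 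Your substitute argument---``$\PP^1$-equivariance'' plus the observation that the orbit arc has the same tangential and normal structure at every interior point---concerns the geometry of the orbit inside the direction space, which says nothing about the Morse data of the associated projections on $X\times Y$; constancy along the orbit is a \emph{consequence} of (i)--(iii), not something obtainable by inspecting the arc. Finally, the join-multiplicativity of the measure that you invoke belongs to the corollary that follows, not to this lemma.
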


\begin{corollary}
  \begin{align*}
    \kappa_{X\times Y}(U\times V) = \int_{\PP^{m+n+1}} \sum_{U\times V}\alpha(X\times Y)
    = \int_{\PP^n}\sum_U\alpha(X) \cdot \int_{\PP^m}\sum_V\alpha(Y) = \kappa_X(U)\cdot\kappa_Y(V)
  \end{align*}
\end{corollary}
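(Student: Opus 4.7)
The plan is to unfold the definition of $\kappa_{X\times Y}$, apply the preceding Lemma to the integrand on the open dense subset where it holds, and then exploit the join structure of the measure on $\PP^{m+n+1}$ to separate variables. In other words, the first and last equalities in the displayed chain are unpackings of definitions (of $\kappa_{X\times Y}$ and of $\kappa_X,\kappa_Y$ respectively), while the middle equality is the real content and is essentially an application of Fubini to the join.

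First I would recast the Br\"ocker--Kuppe measure in projective form. For a compact tamely stratified subset of $\RR^{m+n+2}$, the measure $\kappa_{X\times Y}(U\times V)$ is defined by averaging $\sum_{U\times V}\alpha(\mrm{h}_x,\cdot)$ over $x\in\mrm{S}^{m+n+1}$. Because $\mrm{h}_x$ and $\mrm{h}_{-x}$ have the same critical set and their tangential indices assemble into a quantity invariant under $x\mapsto -x$, this average descends to $\PP^{m+n+1}$ with respect to the induced measure $\mu_{\PP^{m+n+1}}$. This gives the first equality in the statement.

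Next I would use the join decomposition $\PP^{m+n+1}=\PP^m\star\PP^n$. Every point $p$ off the fixed locus $\PP^m\cup\PP^n$ lies on a unique join line and determines a pair $(p_X,p_Y)\in\PP^m\times\PP^n$ together with a parameter $[\epsilon_0,\epsilon_1]\in\PP^1$. The preceding Lemma asserts that on this open dense subset the index function splits, $\alpha(X\times Y)=\alpha(X)\star\alpha(Y)$, which at the level of the sum over $U\times V$ becomes a pointwise product of the sums $\sum_U\alpha(X)$ at $p_X$ and $\sum_V\alpha(Y)$ at $p_Y$. I would then invoke the join identity for measures, $\mu_{\PP^{m+n+1}}(U\star V)=\mu_{\PP^m}(U)\cdot\mu_{\PP^n}(V)$, which plays the role of Fubini for the join: integration against $\mu_{\PP^{m+n+1}}$ of a function which is a product of one factor depending only on $p_X$ and one depending only on $p_Y$ factors as the product of the corresponding integrals over $\PP^m$ and $\PP^n$. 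This yields the middle equality. Recognizing each factor as the projective expression of $\kappa_X(U)$ and $\kappa_Y(V)$ completes the chain.

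The main technical obstacle is the exceptional locus $\PP^m\cup\PP^n$ on which the Lemma explicitly fails. This set has positive codimension in $\PP^{m+n+1}$ and hence $\mu_{\PP^{m+n+1}}$-measure zero, so combined with the boundedness of $\sum_{U\times V}\alpha(X\times Y)$ (which follows from tameness: for a generic projection the critical set is finite and the indices take values in a bounded set) it contributes nothing to the integral. A secondary point worth checking is that the join parametrization identifies the restriction of $\mu_{\PP^{m+n+1}}$ to the complement of the fixed locus with the pushforward of $\mu_{\PP^m}\otimes\mu_{\PP^n}\otimes(\text{join parameter})$ in a way compatible with Fubini; once that is in hand, the computation reduces to the formal manipulation displayed in the statement.
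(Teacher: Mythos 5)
Your argument is essentially the paper's own: the paper states this corollary with no proof beyond the displayed chain, and that chain is exactly your outline --- unfold the Br\"ocker--Kuppe definition of the curvature measure, apply the preceding Lemma off the exceptional locus $\PP^m\cup\PP^n$ (which is $\mu_{\PP^{m+n+1}}$-null), and separate variables via the join identity $\mu_{\PP^{m+n+1}}(U\star V)=\mu_{\PP^m}(U)\cdot\mu_{\PP^n}(V)$ playing the role of Fubini. One small quibble: the descent from $\mrm{S}^{m+n+1}$ to $\PP^{m+n+1}$ is not because $\sum\alpha(\mrm{h}_x,\cdot)$ is pointwise invariant under $x\mapsto-x$ (it is not --- a maximum and a minimum of $\mrm{h}_x$ already have different indices on an interval); one should instead integrate the antipodal average over $\PP^{m+n+1}$, which changes nothing in the rest of your argument.
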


The author believes this can be dressed up to prove Proposition~\ref{prop:strat-sub}, given the Br\"ocker-Kuppe description of curvature as the average stratified Morse index of projections in all directions: shrinking just distorts the sense of ``average'' according to the above flow.

\pagebreak[3] %

\begin{example}  \mbox{}
  \begin{center}
    \includegraphics[width=118pt]{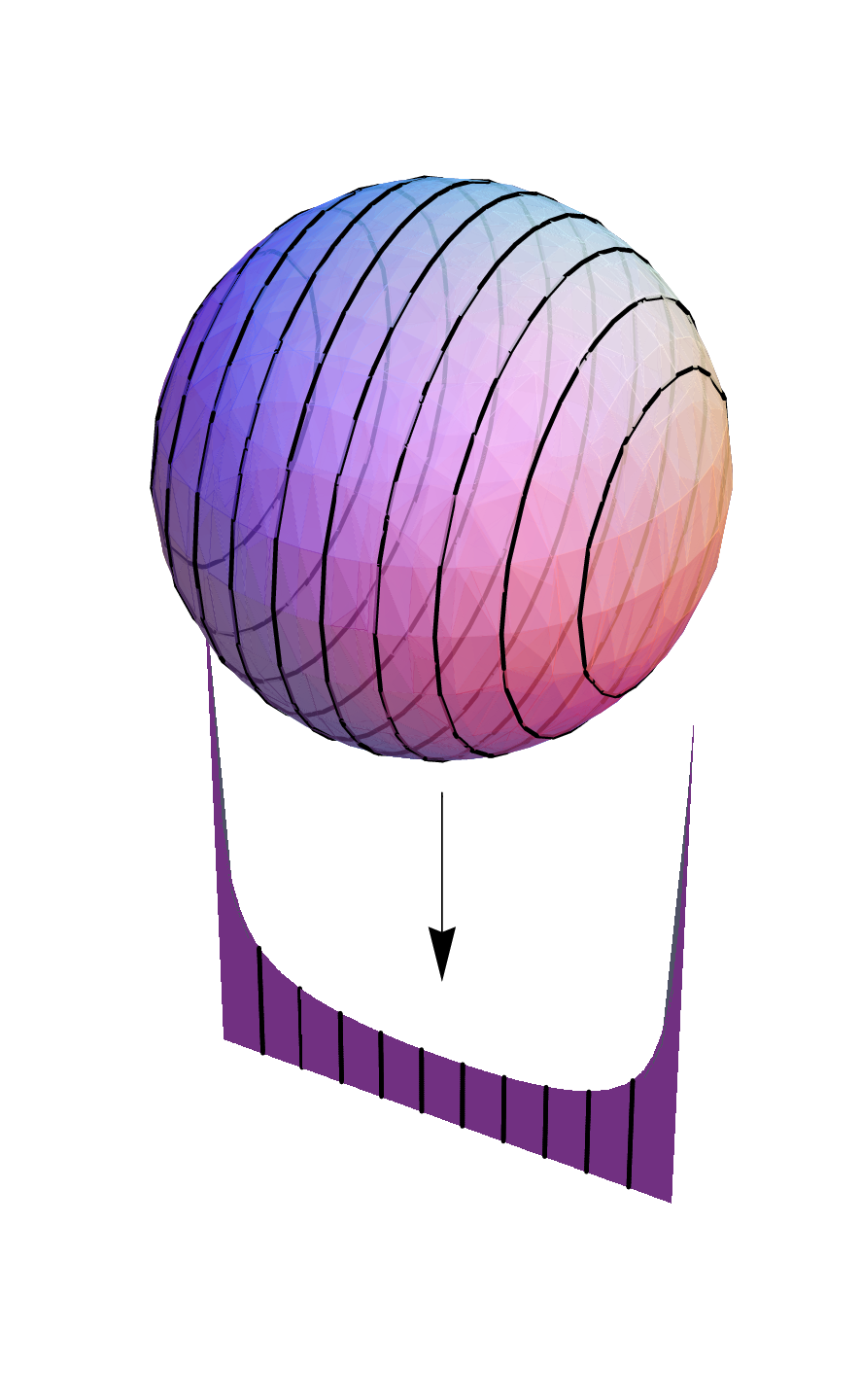}
    \includegraphics[width=118pt]{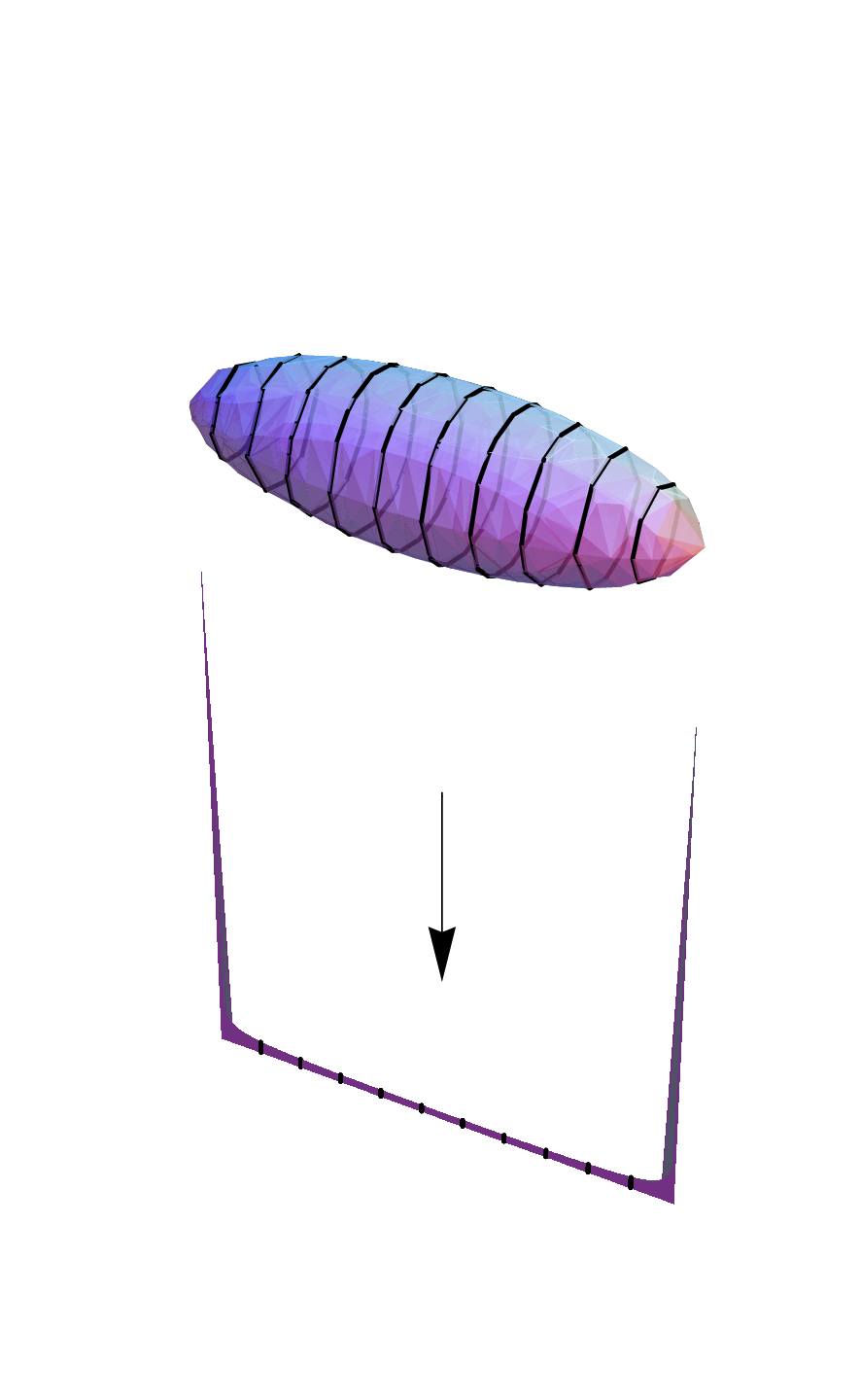}
    \includegraphics[width=118pt]{euler-graph-5.png}
  \end{center}
\end{example}

\appendix

\section{Invariance Under Barycentric Subdivision}

\label{apx:bary}

This section contains a combinatorial proof of Theorem~\ref{thm:crumple}. Its geometric meaning remains mysterious to the author. (Subdividing a simplicial complex and giving each resulting 1-simplex length~1 essentially crumples up the complex, albeit in a symmetric way.)

\begin{lemma}
  If $\Delta^{(1)}$ is the first barycentric subdivision of an $n$-simplex $\Delta$ then:
  \begin{align*}
    \int_{\Delta^{(1)}} \alpha\,\dchi = \alpha(\hat{\Delta})
  \end{align*}
\end{lemma}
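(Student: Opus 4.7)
The plan is to reduce the integral on $\Delta^{(1)}$ to a sum indexed by vertices of the original simplex $\Delta$ via the weight formula $\int_X \alpha\,\dchi=\sum_w\alpha(w)\w(w)$ derived earlier, and then exploit the $S_{n+1}$-symmetry of $\Delta$ to pin down the resulting coefficients without actually computing any barycentric weight explicitly.

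First I would identify the vertices of $\Delta^{(1)}$ with the nonempty faces $F\subseteq\Delta$ via $F\leftrightarrow\hat{F}$, and write
\begin{align*}
  \int_{\Delta^{(1)}}\alpha\,\dchi=\sum_{F\subseteq\Delta}\w(\hat{F})\,\alpha(\hat{F}).
\end{align*}
Since $\alpha$ is the linear extension of its vertex values on $\Delta$, we have $\alpha(\hat F)=\tfrac1{|F|}\sum_{v\in F}\alpha(v)$. Swapping the order of summation gives
\begin{align*}
  \int_{\Delta^{(1)}}\alpha\,\dchi=\sum_{v\in\Delta}\alpha(v)\,c_v,\qquad c_v:=\sum_{F\ni v}\frac{\w(\hat F)}{|F|}.
\end{align*}

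The crux is to show that $c_v$ does not depend on $v$. The weight $\w(\hat F)$ depends only on the star of $\hat F$ in $\Delta^{(1)}$, which in turn depends only on $\dim F$; alternatively, the natural action of $S_{n+1}$ on $\Delta$ by vertex permutations extends to a simplicial action on $\Delta^{(1)}$ under which $\w$ is invariant and $S_{n+1}$ acts transitively on vertices of $\Delta$. Either way, $c_v$ is a constant $c$ independent of $v$. Summing $c_v$ over all $n+1$ vertices of $\Delta$ and switching the order of summation back,
\begin{align*}
  (n+1)\,c=\sum_v\sum_{F\ni v}\frac{\w(\hat F)}{|F|}=\sum_F\w(\hat F)=\chi(\Delta^{(1)})=1,
\end{align*}
where the penultimate equality is the one-line double-counting identity $\sum_w\w(w)=\chi$ that follows directly from the definition of $\w$. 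Hence $c=1/(n+1)$ and
\begin{align*}
  \int_{\Delta^{(1)}}\alpha\,\dchi=\frac{1}{n+1}\sum_{v\in\Delta}\alpha(v)=\alpha(\hat\Delta).
\end{align*}

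There is no real obstacle here: the only subtle point is noticing that symmetry plus the single global identity $\sum_w\w(w)=\chi$ determines all the coefficients $c_v$ at once, which lets us avoid the combinatorial mess of computing $\w(\hat F)$ for each face $F$ directly. The argument would need more work for the $n$-fold iterated subdivision of Theorem~\ref{thm:crumple}, where the $S_{n+1}$-symmetry argument still applies star by star but the accounting is more delicate.
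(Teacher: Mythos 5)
Your proof is correct, and it takes a genuinely different (and leaner) route than the paper. The paper works directly with the sum over simplices of $\Delta^{(1)}$: it groups the chains $A_0\subsetneq\cdots\subsetneq A_i$ by their signature $(s_0,\dots,s_i)$, counts each class via orbit--stabilizer (multinomial coefficients), computes the coefficient of each original vertex inside each class, and then exhibits an explicit pairwise cancellation $B(s_0,\dots,s_i)+B(s_0,\dots,s_{i-1})=0$, leaving only the signature-$(n+1)$ term $\hat{\Delta}$. You instead start from the vertex-weight form $\int_{\Delta^{(1)}}\alpha\,\dchi=\sum_w \alpha(w)\,\w(w)$, push the weights down to the original vertices via $\alpha(\hat F)=\tfrac1{|F|}\sum_{v\in F}\alpha(v)$, observe by $\mathrm{S}_{n+1}$-equivariance (the permutation action on $\Delta$ extends to simplicial automorphisms of $\Delta^{(1)}$ preserving $\w$ and acting transitively on the original vertices) that the resulting coefficient $c_v$ is constant, and then normalize with the single identity $\sum_w\w(w)=\chi(\Delta^{(1)})=1$; this pins the functional down as evaluation at the barycenter with no multinomial computation at all. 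Both of your justifications for the constancy of $c_v$ (the group action, or the fact that the star of $\hat F$ depends only on $\dim F$) are sound, and the two ingredients you invoke without proof are immediate: $\sum_w\w(w)=\chi$ is the double count in which each $i$-simplex contributes $(-1)^i/(i+1)$ at each of its $i+1$ vertices, and $\chi(\Delta^{(1)})=1$ since $\Delta^{(1)}$ triangulates the closed simplex. What each approach buys: yours is shorter and conceptually transparent (symmetry plus a global normalization determine all coefficients at once), while the paper's computation yields finer information---the explicit values $B(s_0,\dots,s_i)$ and exactly which families of simplices cancel against which. You are also right to flag that Theorem~\ref{thm:crumple} for a general complex and iterated subdivisions does not follow instantly from the single-simplex lemma by your symmetry argument alone; the star of a barycenter in $X^{(1)}$ depends on the ambient complex, so some additional bookkeeping (as in the paper's Corollary~\ref{cor:barycentric}) is still needed there.
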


\begin{proof}
  One can describe $\Delta^{(1)}$ as the nerve of the category whose objects are nonempty subsets of the vertex set of $\Delta$ and whose morphisms are strict inclusions. In particular, each $i$-simplex of $\Delta^{(1)}$ corresponds to a length-$(i+1)$ chain of strict inclusions:
  \begin{align*}
    A_0 \to A_1 \to \cdots \to A_i
  \end{align*}
  Intuitively, the vertices of the corresponding $i$-simplex are the barycenters of each set $A_k$.

  Associated to each such $i$-simplex is the sequence of integers:
  \begin{align*}
    \Big( \big|A_0\big|, \big|A_1\setminus A_0\big|, \big|A_2\setminus A_1\big|, \dots, \big|A_i\setminus A_{i-1}\big| \Big)
  \end{align*}
  called the \emph{signature} $(s_0,\dots,s_i)$ of the simplex.

  \medskip

  To prove the lemma we will write:
  \begin{align*}
    \sum_{\sigma \in \Delta^{(1)}} (-1)^{\dim(\sigma)} \hat{\sigma} 
    = \sum B(s_0,\dots,s_i)
  \end{align*}
  where $B(s_0,\dots,s_i)$ consists of the terms corresponding to simplices with signature $(s_0,\dots,s_i)$ and then show that if $i>0$ and $s_0+\cdots+s_i = n+1$ then:
  \begin{align*}
    B(s_0,\dots,s_i)+B(s_0,\dots,s_{i-1})=0
  \end{align*}
  The lemma will then follow since the only simplex of $\Delta^{(1)}$ whose signature does not belong to such a cancelling pair is the barycenter $\hat{\Delta}$, which has signature $(n+1)$.

  \smallskip

  To simplify notation, assume from now on that $s_0+\cdots+s_i=n+1$.

  \smallskip

  The action of the symmetric group $\mrm{S}_{n+1}$ on the vertex set of $\Delta$ induces a transitive action on the set of simplices of $\Delta^{(1)}$ with a given signature $(s_0,\dots,s_i)$.

  The stabilizer of any such simplex is isomorphic to the product of symmetric groups:
  \begin{align*}
    \mrm{S}_{s_0} \times \cdots \times \mrm{S}_{s_i}
  \end{align*}

  By the orbit stabilizer theorem, then, the number of simplices of $\Delta^{(1)}$ with signature $(s_0,\dots,s_n)$ equals the multinomial coefficient:
  \begin{align*}
    \binom{n+1}{s_0,\dots,s_n} = \frac{(n+1)!}{s_0! \cdots s_n!}
  \end{align*}
 
  Now, $B(s_0,\dots,s_i)$ is symmetric in the vertices of $\Delta$, so we may concentrate on the coefficient of a single vertex $v$. A simple argument shows that the number of simplices of $\Delta^{(1)}$ with signature $(s_0,\dots,s_i)$ in which the chosen vertex $v$ appears in the set $A_k$ but not in $A_{k-1}$ equals:
  \begin{align*}
    \binom{n}{s_0,\dots,s_k-1,\dots,s_i}
    = \frac{s_k}{n+1} \binom{n+1}{s_0,\dots,s_i}
  \end{align*}
  The coefficient of $v$ in the barycenter of such a simplex equals:
  \begin{align*}
    \frac1{i+1} \sum_{j=k}^i \frac1{\sum_{l=0}^j s_j}
  \end{align*}
  This is because the vertices of such a simplex are the barycenters of the sets $A_l$, and the vertex $v$ first appears in $A_k$.
  
  Summing over $k$ and multiplying by $(-1)^{i-1}$ we obtain that the coefficient of $v$ in $B(s_0,\dots,s_i)$ equals:
  \begin{align*}
    {(-1)^{i-1}} \frac1{i+1} &\sum_{k=0}^i \left( \sum_{j=k}^i \frac1{\sum_{l=0}^j s_j} \right) \frac{s_k}{n+1} \binom{n+1}{s_0,\dots,s_i} \\
    = \frac{(-1)^{i-1}}{n+1} \binom{n+1}{s_0,\dots,s_i} \frac1{i+1} &\sum_{k=0}^i \left( \sum_{j=k}^i \frac1{\sum_{l=0}^j s_j} \right) s_k
  \end{align*}
  The nested sum simplifies considerably since:
  \begin{equation*}
    \begin{split}
      \Big(\frac1{s_0} + \cdots + \frac1{s_0+\cdots+s_n}\Big)s_0
      &+ \Big(\frac1{s_0+s_1} + \cdots + \frac1{s_0+\cdots+s_n}\Big)s_1 \\
      &+ \Big(\frac1{s_0+s_1+s_2} + \cdots + \frac1{s_0+\cdots+s_n}\Big)s_2 \\
      & \,\,\, \vdots \\
      &+ \Big( \frac1{s_0+\cdots+s_n} \Big) s_n
    \end{split}
    \end{equation*}
    \begin{align*}
      &
      \begin{aligned}
        =\frac1{s_0} s_0 + \frac1{s_0+s_1}(s_0+s_1) 
        &+ \frac1{s_0+s_1+s_2}(s_0+s_1+s_2) \\ + \cdots &+ \frac1{s_0+\cdots+s_i}(s_0+\cdots+s_i) \\
      \end{aligned} \\
      &= i+1
  \end{align*}
  Therefore:
  \begin{align*}
    B(s_0,\dots,s_i) &= (n+1) \hat{\Delta} \cdot \frac{(-1)^{i-1}}{n+1} \binom{n+1}{s_0,\dots,s_i} \frac1{i+1} \cdot (i+1) \\
    &= (n+1) \hat{\Delta} \cdot \frac{(-1)^{i-1}}{n+1} \binom{n+1}{s_0,\dots,s_i}
  \end{align*}

\smallskip

Virtually the same argument shows that if $i>0$ then:
\begin{align*}
  B(s_0,\dots,s_{i-1}) &= \frac{(-1)^{i-2}}{n+1} \binom{n+1}{s_0,\dots,s_i} \\
  &= - B(s_0,\dots,s_i)
\end{align*}
  The main point here is that the stabilizer under the action of $\mrm{S}_{n+1}$ of a simplex of $\Delta^{(1)}$ with signature $(s_0,\dots,s_{i-1})$ is isomorphic to the stabilizer of a simplex with signature $(s_0,\dots,s_i)$.
\end{proof}

\begin{corollary}
  \label{cor:barycentric}
  For any $i\ge1$:
  \begin{align*}
    \int_X \alpha \; \dchi = \int_{X^{(i)}} \alpha^{(i)} \; \dchi
  \end{align*}
  where $\alpha^{(i)} : X^{(i)}\to\RR^{(i)}$ is the linear extension of $\alpha$ to the $i$th barycentric subdivision $X^{(i)}$ of $X$.
\end{corollary}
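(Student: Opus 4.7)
The plan is to establish the corollary for $i=1$ and then bootstrap by iterating, using $X^{(i)}=(X^{(i-1)})^{(1)}$ and $\alpha^{(i)}=(\alpha^{(i-1)})^{(1)}$. The case $i=1$ carries all the weight, and I would prove it by induction on the number of simplices of $X$, with the empty complex as trivial base case.

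For the inductive step, choose a maximal simplex $\tau$ of $X$ and set $X'=X\setminus\{\tau\}$, which remains a subcomplex. The tentative definition immediately yields $\int_X\alpha\,\dchi-\int_{X'}\alpha\,\dchi=(-1)^{\dim\tau}\alpha(\hat\tau)$. On the subdivision side, each simplex of $X^{(1)}$ is a chain of simplices of $X$ under strict inclusion, and since $\tau$ is maximal the chains involving $\tau$ are precisely those ending at $\tau$; these constitute $X^{(1)}\setminus(X')^{(1)}$. Combining with the inductive hypothesis $\int_{X'}\alpha\,\dchi=\int_{(X')^{(1)}}\alpha^{(1)}\,\dchi$, the problem reduces to showing
\begin{align*}
\sum_{\sigma\text{ ending at }\tau}(-1)^{\dim\sigma}\alpha^{(1)}(\hat\sigma)=(-1)^{\dim\tau}\alpha(\hat\tau).
\end{align*}

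To verify this reduced identity, I would apply the preceding lemma to the closed simplex $\bar\tau$, which gives $\sum_{\sigma\in\bar\tau^{(1)}}(-1)^{\dim\sigma}\alpha^{(1)}(\hat\sigma)=\alpha(\hat\tau)$. Splitting $\bar\tau^{(1)}$ into chains ending at $\tau$ and chains lying entirely in $\partial\tau$ gives
\begin{align*}
\sum_{\sigma\text{ ending at }\tau}(-1)^{\dim\sigma}\alpha^{(1)}(\hat\sigma)=\alpha(\hat\tau)-\int_{(\partial\tau)^{(1)}}\alpha^{(1)}\,\dchi.
\end{align*}
The inductive hypothesis, applied to the strictly smaller complex $\partial\tau$, identifies the last integral with $\int_{\partial\tau}\alpha\,\dchi=\sum_{\rho\subsetneq\tau}(-1)^{\dim\rho}\alpha(\hat\rho)$. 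Finally, the single-simplex identity $\alpha(\hat\tau)=\int_{\bar\tau}\alpha\,\dchi=\sum_{\rho\subseteq\tau}(-1)^{\dim\rho}\alpha(\hat\rho)$ (a direct consequence of the tentative definition, as observed in the paper) yields $\sum_{\rho\subsetneq\tau}(-1)^{\dim\rho}\alpha(\hat\rho)=(1-(-1)^{\dim\tau})\alpha(\hat\tau)$, and assembling the pieces produces the required $(-1)^{\dim\tau}\alpha(\hat\tau)$. The case $i\ge 2$ then follows by iterating.

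The only real obstacle is bookkeeping: confirming that every chain in $X^{(1)}$ involving $\tau$ ends at $\tau$ (this is precisely where maximality of $\tau$ is used), and checking that both inductive applications (to $X'$ and to $\partial\tau$) are to complexes with strictly fewer simplices than $X$. All the hard combinatorial work has already been carried out in the preceding lemma.
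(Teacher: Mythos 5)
Your argument is correct: reducing to the claim that the chains of $X^{(1)}$ with top element a maximal simplex $\tau$ contribute $(-1)^{\dim\tau}\alpha(\hat\tau)$, and verifying this via the Lemma applied to $\bar\tau$, the single-simplex identity $\sum_{\rho\subseteq\tau}(-1)^{\dim\rho}\alpha(\hat\rho)=\alpha(\hat\tau)$, and the inductive hypothesis for $\partial\tau$, is sound, and both inductive applications (to $X'$ and to $\partial\tau$) are indeed to complexes with strictly fewer simplices. This is essentially the intended route: the paper states the corollary as an immediate consequence of the Lemma, and your double induction just makes explicit the bookkeeping (grouping simplices of $X^{(1)}$ by the top element of the corresponding chain) that the paper leaves implicit.
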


\section*{Acknowledgments}

Thanks to Baryshnikov and Ghrist for their inspiring papers. Thanks to Robert MacPherson, Jack Morava, Burt Totaro, Andrew Ranicki, Hermann Karcher, Vin de Silva and J\"org Sch\"urmann for encouraging conversations. Thanks to Jacek Brodzki and the EPSRC for financial support from 2011--2. Thanks to the organizers of \href{http://www.icms.org.uk/workshops/atmcs5}{ATMCS 5} for the opportunity to speak about an early version of this work in 2012.

\def\cprime{$'$}

\end{document}